 \newtheorem{thm}{Theorem}[section]
 \newtheorem{prop}[thm]{Proposition}
 \theoremstyle{definition}
 \newtheorem{definition}[thm]{Definition}
 \newtheorem{rem}[thm]{Remark}
 \numberwithin{equation}{section}
\newcommand{\half}{{\textstyle{1\over2}}}
\newcommand{\R}{\mathbb R}%
\newcommand*{\C}{\mathbb C}%
\newcommand{\g}{\mathfrak{g}}
\newcommand{\h}{\mathfrak{h}}
\renewcommand{\j}{j}
\def\operatorname#1{\mathop{\operator@font #1}\nolimits}%
\DeclareMathOperator{\End}{End}
\DeclareMathOperator{\Ad}{Ad}
\DeclareMathOperator{\id}{Id}
\DeclareMathOperator{\Ker}{Ker}
\newcommand{\Image}{\operatorname{Im}}
\DeclareMathOperator{\Id}{Id}
\newcommand*{\cyclic}{\mathop{\kern0.9ex{{+}\kern-2.2ex\raise-.29ex%
      \hbox{\Large\hbox{$\circlearrowright$}}}}\limits}
\title{ 
Almost complex structures, transverse complex structures, and transverse Dolbeault cohomology. 
}
\author{
Michel Cahen$^1$,  Jean Gutt$^2$ and  Simone Gutt$^1$\\
\scriptsize{michel.cahen@ulb.be, jean.gutt@math.univ-toulouse.fr,   simone.gutt@ulb.be}\\
\footnotesize{$^1$   Universit\'{e} Libre de Bruxelles}\\[-3pt]
\footnotesize Campus Plaine, CP 218, Boulevard du Triomphe\\[-3pt]
\footnotesize BE -- 1050 Bruxelles, Belgium\\[-3pt]
\footnotesize \& Acad\'emie Royale de Belgique.\\
\footnotesize{$^2$  Universit\'e Toulouse III -- Paul Sabatier}\\[-3pt]
\footnotesize 118 route de Narbonne, 31062 Toulouse Cedex 9\\[-3pt]
\footnotesize \& Institut National Universitaire Champollion\\[-3pt]
 \footnotesize  Place de Verdun, 81012 Albi, France\\[-3pt] 
}
\begin{document}

\maketitle
\begin{abstract}  We define  a  transverse Dolbeault cohomology associated to any almost complex structure $j$ on a smooth manifold $M$. This we do by extending the notion of transverse complex structure and by introducing a natural   $\j$-stable involutive limit distribution with such a transverse complex structure. We relate this transverse Dolbeault cohomology  to the  generalized Dolbeault cohomology of $(M,\j)$  introduced by Cirici and Wilson in \cite{bib:CW}, showing that the $(p,0)$ cohomology spaces coincide.
This study of transversality leads us to suggest a notion of minimally non-integrable almost complex structure.
\end{abstract}

\section*{Introduction}
Dolbeault cohomology is defined for a manifold endowed with an integrable almost complex structure.
There have been various ways to extend this cohomology to a manifold endowed with a non integrable almost complex structure.
An almost complex structure $\j$ is a smooth field of endomorphisms of the tangent bundle whose square is minus the identity ($\j^2=-\id$). It induces a splitting of the complexified tangent bundle
$
TM^\C=T_\j^{1,0}\oplus T_\j^{0,1}
$
into $\pm i $ eigenspaces for $\j$,  a corresponding dual splitting of the complexified cotangent bundle and a splitting of complex valued $k$-forms on $M$: $$\Omega^k(M,{\mathbb{C}})=\oplus_{p+q=k}\Omega_j^{p,q}.$$ The exterior differential $d$ has the property that 
$$d\Omega_j^{p,q}\subset \Omega_j^{p-1,q+2}\oplus\Omega_j^{p,q+1}\oplus\Omega_j^{p+1,q}\oplus\Omega_j^{p+2,q-1}$$  and splits accordingly as
$$d=\bar{\mu}\oplus \bar{\partial}\oplus\partial\oplus \mu .$$
An almost complex structure $j$ is integrable if and only if $d=\bar{\partial}\oplus\partial$, i.e. $\bar{\mu}=\mu=0$, which is equivalent to the vanishing of the Nijenhuis tensor (also called torsion) $N^j$ of $j$.
The operator $\bar{\partial}$ defines then the Dolbeault cohomology of the complex manifold $(M,j)$.

An important feature about this cohomology comes from Hodge theory \cite{bib:Hodge} which relates it to harmonic forms. This requires the choice of a Riemannian metric $g$ compatible with $j$.
Many works are devoted to the study of properties of $\bar\partial$-harmonic $(p,q)$-forms or variations of those when $\j$ is not integrable; see for instance \cite{bib:TT,bib:V}, the  recent review by Zhang of  Hodge theory for almost complex manifolds \cite{bib:Zhang}, and papers quoted there.
In Hirzebruch’s 1954 problem list \cite{bib:Hirz}, a question attributed to Kodaira and Spencer concerning these spaces  asks whether the dimension of the space of ${\bar\partial}$-harmonic $(p,q)$-forms depends on the choice of the Riemannian metric, or only on the almost complex structure $j$. In 2020, Holt and Zhang gave in \cite{bib:HZ1} examples showing that the dimension may depend on the choice of the metric. This raises the interest of a cohomology depending only on the almost complex structure $j$.

A solution was given in 2021 by J. Cirici and S. Wilson who gave in \cite{bib:CW} the definition of a generalized Dolbeault cohomology associated to an almost complex structure $j$. It
 is defined as the cohomology of the operator induced by $\bar\partial$ on the cohomology spaces for the operator $\bar\mu$. Some of these cohomology spaces are infinite dimensional in the non integrable case \cite{bib:CPS}.\\

In the present paper, we suggest another Dolbeault cohomology defined only in terms of the almost complex structure $j$, which we call  transverse Dolbeault cohomology. 
 It  is defined as the (usual) Dolbeault cohomology of a natural transverse complex structure induced by the given almost complex structure.\\ 
We consider the involutive $\j$ stable (generalized) distribution defined by  the real part of the limit of the derived flag of distributions associated to the $+i$ eigenspace of $j$.
It is  a natural involutive limit of a sequence of nested real distributions $\mathcal{D}_j^\infty=\cup_k \mathcal{D}_j^{(k)}$ associated to $j$, which contains the image of the Nijenhuis tensor $N^j$. We extend the notion of transverse complex structure to that setting.  The cohomology we define  is the cohomology of the $\bar\partial$ operator restricted to a subspace
of forms which are clearly in the kernel of $\bar\mu$: the forms whose contraction and Lie derivative with respect to a vector field in $\mathcal{D}_j^\infty$ vanish.\\
Equivalently, it is the cohomology of the operator  $\bar\partial$  restricted to the smallest $C^\infty(M,\mathbb{C})$-submodule $\Omega_{\mathcal{D}_j^\infty}(M)$ of smooth complex forms on the manifold, which has the property that it consists of forms  vanishing whenever contracted with a vector field in the image of the Nijenhuis tensor of $j$, which is stable under the differential $d$ and which  splits into $(p,q)$ components relatively to $j$
(i.e. for $\omega$  a $k$-form in the submodule, $\omega\circ j_r$ is in the submodule for any $r\le k$, where $\circ j_r$ indicates the precomposition with $j$ acting on the $r$-th argument of $\omega$).

If one of the derived distribution is involutive and has constant rank and if the space of leaves of 
its real part has a manifold structure making the canonical projection a submersion, then $\j$ induces a complex structure on this quotient manifold  and the cohomology we define coincides with the Dolbeault cohomology of this space of leaves.\\

An almost complex structure is maximally non-integrable \cite{bib:CGGH} when the image of the Nijenhuis tensor at each point $p$ spans the whole tangent space at $p$.
For a maximally non-integrable almost complex structure, all our transverse Dolbeault cohomology spaces vanish.
The first geometrical non-integrable almost complex structure was given by Eells and Salamon in \cite{bib:ES}; it arises on a twistor space when flipping the sign of the vertical part of the standard integrable almost complex structure on this space and it is maximally non-integrable; this property remains true for a similar construction on many twistor spaces \cite{bib:CGR}.
Maximally non integrable almost complex structures   are generic in high dimension :  R. Coelho, G. Placini, and J. Stelzig prove in \cite{bib:CPS}
that in dimension $2n\ge 10$  any almost complex structure on a $2n$-dimensional manifold is homotopic to a maximally non-integrable one. 

A minimality condition for a non-integrable almost complex structure is given by asking the involutivity of the first derived distribution of  the $+i$ eigenspace of $j$. This corresponds to the existence of a maximal transverse complex structure. Examples are given by complex line bundles over complex manifolds with no holomorphic structure with the $\j$ naturally defined when choosing a connection. Another  example in an invariant situation  is given by Thurston's example of a $4$-dimensional compact symplectic  manifold with no K\"ahler structure endowed with an invariant $\j$.\\

In section \ref{section:derived},  we describe the derived flag  of distributions associated to the $+i$ eigenspace of an almost complex structure $j$ and the involutive  real limit distribution  $\mathcal{D}_j^\infty$. \\ 
In section \ref{section:transverse}, we define $\mathcal{D}$-transverse objects when $\mathcal{D}=\Gamma^\infty(D)$ is the space of smooth sections  of a real smooth  involutive distribution $D$ (not necessarily of constant rank) or an involutive  limit  of an increasing sequence of  spaces of sections such as $\mathcal{D}_j^\infty$.
The definitions are chosen so that, in the case where $\mathcal{D}=\Gamma^\infty(D)$ with $D$  of constant dimension and such that the space of leaves for  $D$, denoted by $M/D$, has a manifold structure making the canonical projection  $p: M \rightarrow M/D$  a submersion, $\mathcal{D}$-transverse objects  correspond to objects on the space of leaves $M/D$. 
We define in the general context $\mathcal{D}$-transverse vector fields, $\mathcal{D}$-transverse almost complex   and complex structures, $\mathcal{D}$-transverse forms and $\mathcal{D}$-transverse Dolbeault cohomology.\\
In section \ref{section:transverse Dolbeault cohomology}, we define the transverse Dolbeault cohomology associated to an almost complex structure $\j$ : it is the $\mathcal{D}$-transverse Dolbeault cohomology in the sense of section \ref{section:transverse} when $\mathcal{D}$ is the real limit distribution  $\mathcal{D}_j^\infty$ defined in section \ref{section:derived}. We prove that the $p,0$-cohomology spaces for this transverse cohomology coincide with the $p,0$-cohomology spaces for the Dolbeault cohomology introduced by Cirici and Wilson.\\
In section \ref{section:minimality for non-integrability}, we suggest a minimality condition for a non-integrable almost complex structure and we write this  condition in a homogeneous framework.

\section{Derived distributions associated to an almost complex structure}\label{section:derived}

The {\it{Nijenhuis  tensor, also called torsion, associated to a smooth field $k$ of endomorphisms  of the tangent bundle}} is the tensor of type  
 $(1,2)$ defined by
\begin{equation}
N^k(X,Y):=[kX,kY]-k[kX,Y]-k[X,kY]+k^2[X,Y] \quad \forall X,Y\in \mathfrak{X}(M),
\end{equation}
where $\mathfrak{X}(M)$ is the Lie algebra of ${C}^{\infty}$ vector fields on M.\\
The Newlander-Nirenberg theorem asserts that an almost complex structure $j$ on a manifold $M$  is integrable if and only if its Nijenhuis tensor $N^j$ vanishes identically.

Given an almost complex structure $\j$ on a manifold $M$, the {\it{image distribution}}  on  $M$, denoted $\Image N^j$, has for value at a point $x\in M $ the subspace $(\Image N^j)_x$ of the tangent space $T_xM$ spanned by all values  $N^j_x(X,Y)$. Since $N^j_x(jX,Y)=-jN^j_x(X,Y)$, this distribution is stable by $j$. It is smooth (in the sense that in the neighborhood of any point, there exists a finite  number of smooth vector fields
whose values at any point linearly generate the distribution at that point) but the dimension of the distribution may not be constant.

One has the splitting of the complexified tangent bundle induced by $\j$:
$$
TM^\C=T_\j^{1,0}\oplus T_\j^{0,1}
$$
into $\pm i $ eigenspaces for $\j$.
We shall denote by $\mathcal{T}_\j^\pm$ the sections of the corresponding distributions.
We have the $C^\infty(M,\R)$-linear bijections 
$$
A^\pm : {\mathfrak{X}}(M)\rightarrow \mathcal{T}_\j^\pm : X \mapsto \half (X\mp i \j X).
$$ 
Observe that, for any $X,W\in {\mathfrak{X}}(M)$ one has :
\begin{eqnarray}\label{eq:decomp}
X+iW&=& \half \left((X+\j W)-ij(X+\j W)\right)+\half \left((X-\j W)+i\j (X-\j W)\right)\nonumber\\
&=&A^+(X+\j W)+A^-(X-\j W),
\end{eqnarray}
and
 \begin{eqnarray}
[X-i\epsilon \j X,Y-i\epsilon' \j Y]&=&[X,Y]-\epsilon\epsilon'[\j X,\j Y]-i\left(\epsilon[\j X,Y]+\epsilon'[X,\j Y]\right)\nonumber\\
&=& A^+\left([X,Y]-\epsilon\epsilon'[\j X,\j Y]-\epsilon \j [\j X,Y]-\epsilon' \j [X,\j Y]\right)\label{eq:bracketj}\\
&& + A^-\left([X,Y]-\epsilon\epsilon'[\j X,\j Y]+\epsilon \j [\j X,Y]+\epsilon' \j [X,\j Y]\right).\nonumber
\end{eqnarray}
With $\epsilon=\epsilon'=1$, the above shows that the projection on $\mathcal{T}_\j^-$ of 
$[X-i\j X,Y-i \j Y]$ is given by $A^-(-N^\j(X,Y))$.
Hence  the bracket of two sections in $\mathcal{T}_\j^+$  is always an element of $\mathcal{T}_\j^+$ 
iff the Nijenhuis tensor is zero. A rephrasing of Newlander-Nirenberg's theorem is that $j$ is integrable iff  the distribution $\mathcal{T}_\j^+$ (and hence $\mathcal{T}_\j^-$) is involutive.

When a distribution is not involutive, one extends it to make it involutive in the following way.
\begin{definition}
Given  a smooth real (resp. complex) distribution $D$ whose sections are denoted $\mathcal{D}$, one defines the {\emph{derived flag of the distribution}} as the nested sequence of distributions defined inductively by
$$
\mathcal{D}^{(0)}=\mathcal{D} \qquad \mathcal{D}^{(1)}=\mathcal{D}+[\mathcal{D},\mathcal{D}] \qquad  \mathcal{D}^{(i+1)}=\mathcal{D}^{(i)}+[\mathcal{D}^{(i)},\mathcal{D}^{(i)}].
$$
The limit $\mathcal{D}^{\infty}:=\cup_k \mathcal{D}^{(k)}$ is called \emph{the involutive limit distribution}; it is a $C^\infty (M,\mathbb{R})$ submodule (resp. $C^\infty (M,\mathbb{C})$ submodule) of real (resp. complex) vector fields, and is ``involutive" in the sense that it is a Lie subalgebra of vector fields (i.e. closed under bracket of vector fields).
\end{definition}
The computation made above shows the following.
\begin{prop} The first derived distribution of $\mathcal{T}_\j^+$ is given
by 
$$
\left(\mathcal{T}_\j^+\right)^{(1)}=\mathcal{T}_\j^++[\mathcal{T}_\j^+,\mathcal{T}_\j^+]=\mathcal{T}_\j^+\oplus A^-(\mathcal{I}m\, N^\j)=\mathcal{T}_\j^+ + \left(\mathcal{I}m\, N^\j\right)^\C
$$
where $\mathcal{I}m\, N^\j$ denotes the sections of $\Image N^\j$.
\end{prop}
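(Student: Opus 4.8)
The plan is to read off the statement from the bracket identity \eqref{eq:bracketj} specialised to $\epsilon=\epsilon'=1$, which has already appeared in the discussion preceding the proposition; what is left is bookkeeping with the relevant $C^\infty(M,\C)$-module structures, so I do not expect a genuine obstacle.

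I would first record two elementary observations. Extended $\C$-linearly, $A^+$ and $A^-$ are $C^\infty(M,\C)$-linear bijections onto $\mathcal{T}_\j^+$ and $\mathcal{T}_\j^-$, and $\mathcal{T}_\j^+$ is generated as a $C^\infty(M,\C)$-module by the elements $A^+(X)$ with $X\in\mathfrak{X}(M)$ (indeed by the $A^+$-images of any local frame of $TM$). Next, since $\Image N^\j$ is $\j$-stable, any real section $Z\in\mathcal{I}m\,N^\j$ satisfies $A^\pm(Z)=\half(Z\mp i\j Z)\in(\mathcal{I}m\,N^\j)^\C$; together with the decomposition $Z=A^+(Z)+A^-(Z)$ from \eqref{eq:decomp}, this yields $(\mathcal{I}m\,N^\j)^\C=A^+(\mathcal{I}m\,N^\j)\oplus A^-(\mathcal{I}m\,N^\j)$ with $A^+(\mathcal{I}m\,N^\j)\subseteq\mathcal{T}_\j^+$. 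Hence $\mathcal{T}_\j^++(\mathcal{I}m\,N^\j)^\C=\mathcal{T}_\j^++A^-(\mathcal{I}m\,N^\j)$, and this sum is direct because $A^-$ takes values in $\mathcal{T}_\j^-$ while $\mathcal{T}_\j^+\cap\mathcal{T}_\j^-=0$. This establishes the second equality of the proposition, so it remains to prove $\mathcal{T}_\j^++[\mathcal{T}_\j^+,\mathcal{T}_\j^+]=\mathcal{T}_\j^+\oplus A^-(\mathcal{I}m\,N^\j)$.

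For the inclusion $\subseteq$, given $X,Y\in\mathfrak{X}(M)$ I would apply \eqref{eq:bracketj} with $\epsilon=\epsilon'=1$ to $[A^+(X),A^+(Y)]$ (which is a nonzero constant multiple of $[X-i\j X,Y-i\j Y]$): its $\mathcal{T}_\j^-$-component is, up to a nonzero constant, $A^-(N^\j(X,Y))$, as noted in the text, and its $\mathcal{T}_\j^+$-component has the form $A^+(Z)$ for a suitable $Z\in\mathfrak{X}(M)$; thus $[A^+(X),A^+(Y)]\in\mathcal{T}_\j^++A^-(\mathcal{I}m\,N^\j)$. Since $\mathcal{T}_\j^+$ is generated over $C^\infty(M,\C)$ by such $A^+(X)$ and the Lie bracket obeys the Leibniz rule (so that $\mathcal{D}+[\mathcal{D},\mathcal{D}]$ is automatically a $C^\infty(M,\C)$-module), the whole module $\mathcal{T}_\j^++[\mathcal{T}_\j^+,\mathcal{T}_\j^+]$ is contained in $\mathcal{T}_\j^++A^-(\mathcal{I}m\,N^\j)=\mathcal{T}_\j^+\oplus A^-(\mathcal{I}m\,N^\j)$. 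For $\supseteq$, rearranging the same identity gives $A^-(N^\j(X,Y))\in\mathcal{T}_\j^++[\mathcal{T}_\j^+,\mathcal{T}_\j^+]$; applying the $C^\infty(M,\C)$-linear map $A^-$ to the fact that $\mathcal{I}m\,N^\j$ is generated as a $C^\infty(M,\C)$-module by the vector fields $N^\j(X,Y)$ (this being the smoothness property of the image distribution recorded above, locally realised by the finitely many fields $N^\j(e_a,e_b)$ for a local frame $(e_a)$), we get $A^-(\mathcal{I}m\,N^\j)\subseteq\mathcal{T}_\j^++[\mathcal{T}_\j^+,\mathcal{T}_\j^+]$, and $\mathcal{T}_\j^+$ is trivially contained there as well. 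The one point deserving a little care is precisely this last identification of $\mathcal{I}m\,N^\j$ with the module generated by the $N^\j(X,Y)$; everything else is a direct unwinding of \eqref{eq:bracketj}.
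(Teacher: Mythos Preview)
Your proof is correct and follows exactly the paper's approach, which simply points to the computation \eqref{eq:bracketj} with $\epsilon=\epsilon'=1$; you have merely supplied the surrounding module-theoretic bookkeeping that the paper leaves implicit. One small slip worth noting: the $\C$-linear extensions of $A^\pm$ to $\mathfrak{X}(M)^\C$ are not bijections (the kernel of the extended $A^+$ is $\mathcal{T}_\j^-$), but since you only use that $A^+(\mathfrak{X}(M))$ generates $\mathcal{T}_\j^+$ over $C^\infty(M,\C)$ and that $A^-$ lands in $\mathcal{T}_\j^-$, the argument is unaffected.
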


An almost complex structure $\j$ is said to be {\emph{maximally non-integrable}} if $\Image N^\j=TM$. This happens iff the first derived distribution of $\mathcal{T}_\j^+ $ consist of all complex valued vector fields on $M$.

\begin{prop}\label{prop:kderived}
The $k$-th derived distribution of $\mathcal{T}_j^{1,0}$ can be written as
$$
\left(\mathcal{T}_j^{1,0}\right)^{(k)}=:\mathcal{T}_j^{1,0}\oplus A^-(\mathcal{D}_j^{(k)})=\mathcal{T}_j^{1,0} + \left( \mathcal{D}_j^{(k)}\right)^\C.
$$
where $\mathcal{D}_j^{(1)}=\mathcal{I}m\, N^j$ and the real smooth distributions $\mathcal{D}_j^{(k)}$
are stable under $j$ and defined inductively by 
$$
\mathcal{D}_j^{(k+1)}= \mathcal{D}_j^{(k)}+ \sum_{U\in \mathcal{D}_j^{(k)}}\left(\mathcal{I}m\,{\mathcal{L}}_Uj\right)+[\mathcal{D}_j^{(k)},\mathcal{D}_j^{(k)}],
$$
where $\mathcal{I}m\,{\mathcal{L}}_Uj=\{ \,({\mathcal{L}}_Uj)X\,\vert\, X\in \mathfrak{X}(M)\,\} $ denotes the image of the smooth field of endomorphisms of the tangent bundle given by the Lie derivative of the almost complex structure $j$ in the direction of the vector field $U$.
\end{prop}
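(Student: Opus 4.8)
The plan is to argue by induction on $k$. The base case $k=1$ is exactly the preceding Proposition, together with the facts already recorded in this section that $\mathcal{D}_j^{(1)}=\mathcal{I}m\,N^j$ is smooth and stable under $j$ (the latter because $N^j(jX,Y)=-jN^j(X,Y)$). For the inductive step I assume $\left(\mathcal{T}_j^{1,0}\right)^{(k)}=\mathcal{T}_j^{1,0}+\left(\mathcal{D}_j^{(k)}\right)^\C$ with $\mathcal{D}_j^{(k)}$ a smooth $j$-stable distribution. The key structural observation is that, $\mathcal{D}_j^{(k)}$ being $j$-stable, the module $\left(\mathcal{T}_j^{1,0}\right)^{(k)}$ is generated over $C^\infty(M,\C)$ by the sections $A^+(X)$, $X\in\mathfrak{X}(M)$, together with the \emph{real} vector fields $U\in\mathcal{D}_j^{(k)}$ (note $U=A^+(U)+A^-(U)$ with $A^+(U)\in\mathcal{T}_j^{1,0}$ and $A^-(U)\in A^-(\mathcal{D}_j^{(k)})$, so indeed $U\in\left(\mathcal{T}_j^{1,0}\right)^{(k)}$). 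Hence, by the Leibniz rule for the bracket, $\left(\mathcal{T}_j^{1,0}\right)^{(k+1)}$ equals $\left(\mathcal{T}_j^{1,0}\right)^{(k)}$ plus the $C^\infty(M,\C)$-span of the brackets $[A^+(X),A^+(Y)]$, $[A^+(X),U]$ and $[U,V]$, with $X,Y\in\mathfrak{X}(M)$ and $U,V\in\mathcal{D}_j^{(k)}$.

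The computational core is then to identify the $\mathcal{T}_j^-$-component (i.e. the $A^-$-part) of each of these three brackets. By \eqref{eq:bracketj} with $\epsilon=\epsilon'=1$, the $\mathcal{T}_j^-$-component of $[A^+(X),A^+(Y)]$ is $-\tfrac{1}{4}A^-\!\bigl(N^j(X,Y)\bigr)$. A short computation using the decomposition \eqref{eq:decomp} together with the elementary identity $(\mathcal{L}_U j)\circ j=-j\circ(\mathcal{L}_U j)$ — which in particular shows $\mathcal{I}m\,\mathcal{L}_U j$ to be $j$-stable — gives that the $\mathcal{T}_j^-$-component of $[A^+(X),U]$ is $\tfrac{1}{2}A^-\!\bigl((\mathcal{L}_U j)(jX)\bigr)$, and that, as $X$ ranges over $\mathfrak{X}(M)$, these span $A^-\!\bigl(\mathcal{I}m\,\mathcal{L}_U j\bigr)$. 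Finally $[U,V]$ is a \emph{real} vector field belonging to $[\mathcal{D}_j^{(k)},\mathcal{D}_j^{(k)}]$. Along the way I check, using $j[U,V]=(\mathcal{L}_V j)(U)+[jU,V]$ and the $j$-stability of $\mathcal{D}_j^{(k)}$, that $\mathcal{D}_j^{(k+1)}$ is again $j$-stable (and smooth, by a routine local argument), and that $\mathcal{D}_j^{(1)}\subseteq\mathcal{D}_j^{(k)}\subseteq\mathcal{D}_j^{(k+1)}$.

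Both inclusions then follow. For the inclusion of $\left(\mathcal{T}_j^{1,0}\right)^{(k+1)}$ into $\mathcal{T}_j^{1,0}+A^-(\mathcal{D}_j^{(k+1)})$: each of the three brackets has $A^+$-part in $\mathcal{T}_j^{1,0}$ and $\mathcal{T}_j^-$-part in $A^-(\mathcal{D}_j^{(k+1)})$ — for the first two by the formulas above, since $N^j(X,Y)$ and $(\mathcal{L}_U j)(jX)$ lie in $\mathcal{D}_j^{(k+1)}$, and for $[U,V]$ because the real field $[U,V]\in\mathcal{D}_j^{(k+1)}$ lies in a $j$-stable distribution, so $[U,V]=A^+([U,V])+A^-([U,V])$ with $A^+([U,V])\in\mathcal{T}_j^{1,0}$; since $\mathcal{T}_j^{1,0}+A^-(\mathcal{D}_j^{(k+1)})$ is a $C^\infty(M,\C)$-submodule containing $\left(\mathcal{T}_j^{1,0}\right)^{(k)}$, this gives the claimed inclusion. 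For the reverse inclusion: $\mathcal{T}_j^{1,0}$ and $A^-(\mathcal{D}_j^{(k)})$ already lie in $\left(\mathcal{T}_j^{1,0}\right)^{(k)}$; $A^-\!\bigl(\mathcal{I}m\,\mathcal{L}_U j\bigr)$ is the span of the $\mathcal{T}_j^-$-components of the $[A^+(X),U]$, each of which lies in $\left(\mathcal{T}_j^{1,0}\right)^{(k+1)}$ (its $A^+$-part being in $\mathcal{T}_j^{1,0}$); and $A^-([U,V])=[U,V]-A^+([U,V])$ lies in $\left(\mathcal{T}_j^{1,0}\right)^{(k+1)}$ because the real field $[U,V]$, a bracket of two elements of $\left(\mathcal{T}_j^{1,0}\right)^{(k)}$, does. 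Hence $\mathcal{T}_j^{1,0}+A^-(\mathcal{D}_j^{(k+1)})=\left(\mathcal{T}_j^{1,0}\right)^{(k+1)}$; the sum is direct because $\mathcal{T}_j^{1,0}\cap\mathcal{T}_j^-=0$, and equals $\mathcal{T}_j^{1,0}+\left(\mathcal{D}_j^{(k+1)}\right)^\C$ since $\mathcal{D}_j^{(k+1)}$ is $j$-stable, which closes the induction.

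I expect the one genuinely delicate point to be the reverse inclusion for the term $[\mathcal{D}_j^{(k)},\mathcal{D}_j^{(k)}]$: trying to produce it from the $\mathcal{T}_j^-$-components of brackets of the form $[A^-(U),A^-(V)]$ leads to an unpleasant tangle of terms, whereas the clean route — and the reason it pays to take the \emph{real} vector fields of $\mathcal{D}_j^{(k)}$, rather than their $(0,1)$-parts $A^-(U)$, as module generators — is simply that $[U,V]$ is then itself a real vector field in $\left(\mathcal{T}_j^{1,0}\right)^{(k+1)}$, so its $(0,1)$-part automatically is too. The other bookkeeping to watch is the $j$-stability of $\mathcal{D}_j^{(k+1)}$, which is precisely what makes the two descriptions $\mathcal{T}_j^{1,0}\oplus A^-(\mathcal{D}_j^{(k+1)})$ and $\mathcal{T}_j^{1,0}+\left(\mathcal{D}_j^{(k+1)}\right)^\C$ coincide and lets the induction run.
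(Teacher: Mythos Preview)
Your proof is correct and follows essentially the same approach as the paper's: the key computation is the decomposition of $[A^+(X),U]$ for real $U\in\mathcal{D}_j^{(k)}$, yielding the $A^-$-part $A^-\!\bigl((\mathcal{L}_Uj)(jX)\bigr)$, together with the inductive verification of $j$-stability via $j\circ(\mathcal{L}_Uj)=-(\mathcal{L}_Uj)\circ j$ and $j[U,U']=[U,jU']-(\mathcal{L}_Uj)(U')$. The paper's argument is a terse sketch of precisely these two ingredients, while you have carefully spelled out the induction, the choice of real generators, and both inclusions---but the route is the same.
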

 \begin{proof}
Using again $X+iW =A^+(X+jW)+A^-(X-jW)$, we have
\begin{eqnarray*}
[X-i jX,U]&=&[X,U]-i[jX,U]= A^+\left([X,U]-j[jX,U]\right)
 + A^-\left([X,U]+ j[jX,U]\right)\\
 &=& A^+\left([X,U]-j[jX,U]\right) +  A^-\left(({\mathcal{L}}_Uj)(jX)\right).
\end{eqnarray*}
One proves inductively that the distributions $\mathcal{D}_j^{(k)}$ are stable by $j$, observing that $j$ anticommutes with ${\mathcal{L}}_Uj$ since $j^2=-\Id$, and $j[U,U']=[U,jU']-({\mathcal{L}}_Uj)(U')$.
\end{proof}
\begin{prop}\label{prop:t1invol}
The $k$-th derived distribution $\left(\mathcal{T}_j^{1,0}\right)^{(k)}=:\mathcal{T}_j^{1,0}\oplus A^-(\mathcal{D}_j^{(k)})$ is involutive iff $\mathcal{D}_j^{(k)}$ is  involutive  and has the property that $\mathcal{I}m\,{\mathcal{L}}_Uj\subset \mathcal{D}_j^{(k)}$ for each $U\in \mathcal{D}_j^{(k)}$.\\[0.2cm]
The  complex involutive limit distribution 
\begin{equation}
\left(\mathcal{T}_j^{1,0}\right)^{\infty}:=\cup_k \left(\mathcal{T}_j^{1,0}\right)^{(k)}=\mathcal{T}_j^{1,0}\oplus A^-(\cup_k \mathcal{D}_j^{(k)})\label{def:limitcomplex}
\end{equation} is involutive (in the sense that the bracket of two elements in $\left(\mathcal{T}_j^{1,0}\right)^{\infty}$ is again in $\left(\mathcal{T}_j^{1,0}\right)^{\infty}$).\\[2mm]
The  real limit distribution 
\begin{equation}\label{def:limit}
\mathcal{D}_j^\infty:=\cup_k \mathcal{D}_j^{(k)}
\end{equation} is  a $C^\infty (M,\mathbb{R})$ submodule   of real  vector fields; it is   involutive  and has the property that $$\mathcal{I}m\,{\mathcal{L}}_Uj\subset \mathcal{D}_j^{\infty}\quad \forall U\in \mathcal{D}_j^{\infty}.$$
The first derived distribution of $\mathcal{T}_j^{1,0} $, $\left(\mathcal{T}_j^{1,0}\right)^{(1)}=\mathcal{T}_j^{1,0} + \left(\mathcal{I}m\, N^j\right)^\C$,  is involutive iff
\begin{equation}\label{eq:LieN}
[N,jX]-j[N,X]=({\mathcal{L}}_Nj)X  \in \mathcal{I}m\, N^j\qquad \forall X\in {\mathfrak{X}}(M), \forall N \in \mathcal{I}m\, N^j.
\end{equation}
\end{prop}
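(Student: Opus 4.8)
The plan is to derive all four statements from Proposition~\ref{prop:kderived}, using repeatedly that a $C^\infty$-submodule $\mathcal{E}$ of (real or complex) vector fields is involutive precisely when $\mathcal{E}^{(1)}=\mathcal{E}$. For the first assertion, by definition of the derived flag one has $\bigl((\mathcal{T}_j^{1,0})^{(k)}\bigr)^{(1)}=(\mathcal{T}_j^{1,0})^{(k+1)}$, so $(\mathcal{T}_j^{1,0})^{(k)}$ is involutive iff $(\mathcal{T}_j^{1,0})^{(k+1)}=(\mathcal{T}_j^{1,0})^{(k)}$. By Proposition~\ref{prop:kderived} both sides decompose as $\mathcal{T}_j^{1,0}\oplus A^-(\mathcal{D}_j^{(k+1)})$ and $\mathcal{T}_j^{1,0}\oplus A^-(\mathcal{D}_j^{(k)})$ with $\mathcal{D}_j^{(k)}\subset\mathcal{D}_j^{(k+1)}$; since the sums are direct and $A^-$ is a $C^\infty(M,\R)$-linear bijection onto $\mathcal{T}_j^{0,1}$, this equality is equivalent to $\mathcal{D}_j^{(k+1)}=\mathcal{D}_j^{(k)}$, and plugging in the inductive formula for $\mathcal{D}_j^{(k+1)}$ the latter holds iff $[\mathcal{D}_j^{(k)},\mathcal{D}_j^{(k)}]\subset\mathcal{D}_j^{(k)}$ and $\mathcal{I}m\,\mathcal{L}_Uj\subset\mathcal{D}_j^{(k)}$ for every $U\in\mathcal{D}_j^{(k)}$. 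The second and third assertions are then purely formal, using that the two flags are increasing: $\cup_k(\mathcal{T}_j^{1,0})^{(k)}=\mathcal{T}_j^{1,0}\oplus A^-\bigl(\cup_k\mathcal{D}_j^{(k)}\bigr)$ because $A^-$ commutes with increasing unions, an increasing union of submodules is a submodule, and if two vector fields lie in $\mathcal{D}_j^\infty$ (resp.\ in $(\mathcal{T}_j^{1,0})^\infty$) they both lie in some $\mathcal{D}_j^{(m)}$ (resp.\ $(\mathcal{T}_j^{1,0})^{(m)}$), so their bracket lies in $\mathcal{D}_j^{(m+1)}\subset\mathcal{D}_j^\infty$ (resp.\ in $(\mathcal{T}_j^{1,0})^{(m+1)}\subset(\mathcal{T}_j^{1,0})^\infty$), while $U\in\mathcal{D}_j^{(k)}$ forces $\mathcal{I}m\,\mathcal{L}_Uj\subset\mathcal{D}_j^{(k+1)}\subset\mathcal{D}_j^\infty$.

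For the last assertion I specialise to $k=1$, where $\mathcal{D}_j^{(1)}=\mathcal{I}m\,N^j$ and $(\mathcal{T}_j^{1,0})^{(1)}=\mathcal{T}_j^{1,0}+(\mathcal{I}m\,N^j)^\C$. The elementary fact I use is that for a real section $N$ of $\Image N^j$ the $\mathcal{T}_j^{0,1}$-component of $[A^+(X),N]$ equals $\half\,A^-\bigl((\mathcal{L}_Nj)(jX)\bigr)$; this is read off the identity $[X-ijX,U]=A^+\bigl([X,U]-j[jX,U]\bigr)+A^-\bigl((\mathcal{L}_Uj)(jX)\bigr)$ from the proof of Proposition~\ref{prop:kderived}, since $2A^+(X)=X-ijX$. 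As any element of $(\mathcal{I}m\,N^j)^\C$ is $V+iW$ with $V,W$ real sections of $\Image N^j$, it suffices to test brackets $[A^+(X),N]$ with $N$ real, and such a bracket lies in $(\mathcal{T}_j^{1,0})^{(1)}=\mathcal{T}_j^{1,0}\oplus A^-(\mathcal{I}m\,N^j)$ iff $(\mathcal{L}_Nj)(jX)\in\mathcal{I}m\,N^j$. Since $jX$ ranges over $\mathfrak{X}(M)$ as $X$ does, this shows that $[\mathcal{T}_j^{1,0},(\mathcal{I}m\,N^j)^\C]\subset(\mathcal{T}_j^{1,0})^{(1)}$ is equivalent to condition~\eqref{eq:LieN}. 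One implication of the assertion is then immediate: $(\mathcal{T}_j^{1,0})^{(1)}$ contains $\mathcal{T}_j^{1,0}$ and $(\mathcal{I}m\,N^j)^\C$, so if it is involutive then $[\mathcal{T}_j^{1,0},(\mathcal{I}m\,N^j)^\C]\subset(\mathcal{T}_j^{1,0})^{(1)}$ and hence \eqref{eq:LieN} holds. For the converse, assume \eqref{eq:LieN}; then $[\mathcal{T}_j^{1,0},(\mathcal{I}m\,N^j)^\C]\subset(\mathcal{T}_j^{1,0})^{(1)}$, and combining this with the always valid inclusion $[\mathcal{T}_j^{1,0},\mathcal{T}_j^{1,0}]\subset(\mathcal{T}_j^{1,0})^{(1)}$ and the description $(\mathcal{T}_j^{1,0})^{(1)}=\mathcal{T}_j^{1,0}+(\mathcal{I}m\,N^j)^\C$ gives $[\mathcal{T}_j^{1,0},(\mathcal{T}_j^{1,0})^{(1)}]\subset(\mathcal{T}_j^{1,0})^{(1)}$.

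It then remains to pass from $[\mathcal{T}_j^{1,0},(\mathcal{T}_j^{1,0})^{(1)}]\subset(\mathcal{T}_j^{1,0})^{(1)}$ to full involutivity of $(\mathcal{T}_j^{1,0})^{(1)}$, and this is the one genuinely non-formal point --- naively the first assertion, read at $k=1$, seems to additionally require $\mathcal{I}m\,N^j$ itself to be involutive, which \eqref{eq:LieN} does not obviously provide. The resolution is the general remark: if a module of vector fields $\mathcal{E}$ is generated, as a module, by a submodule $\mathcal{F}$ together with all brackets $[Y,Z]$ with $Y,Z\in\mathcal{F}$, and if $[\mathcal{F},\mathcal{E}]\subset\mathcal{E}$, then $\mathcal{E}$ is a Lie subalgebra; indeed by the Jacobi identity $[[Y,Z],[Y',Z']]=[Y,[Z,[Y',Z']]]-[Z,[Y,[Y',Z']]]$ with $[Z,[Y',Z']]\in[\mathcal{F},\mathcal{E}]\subset\mathcal{E}$, so every iterated bracket and every Leibniz remainder term stays in $\mathcal{E}$. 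Applying this with $\mathcal{F}=\mathcal{T}_j^{1,0}$ and $\mathcal{E}=(\mathcal{T}_j^{1,0})^{(1)}=\mathcal{T}_j^{1,0}+[\mathcal{T}_j^{1,0},\mathcal{T}_j^{1,0}]$ finishes the proof, and in particular shows that under \eqref{eq:LieN} the distribution $\mathcal{I}m\,N^j$ is automatically involutive: because $(\mathcal{T}_j^{1,0})^{(1)}$ arises from $\mathcal{T}_j^{1,0}$ by a \emph{single} bracketing, condition~\eqref{eq:LieN}, which only constrains $[\mathcal{T}_j^{1,0},\,\cdot\,]$, already suffices for the Jacobi identity to propagate it to closedness of $(\mathcal{T}_j^{1,0})^{(1)}$ under all brackets. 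I expect this propagation step to be the main subtlety; everything else is bookkeeping with Proposition~\ref{prop:kderived}.
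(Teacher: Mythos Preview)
Your argument is correct. For the first three assertions you proceed exactly as the paper does, reading everything off Proposition~\ref{prop:kderived}. The difference lies in the last assertion. The paper applies the first assertion at $k=1$, so that involutivity of $(\mathcal{T}_j^{1,0})^{(1)}$ becomes equivalent to \eqref{eq:LieN} \emph{together with} involutivity of $\mathcal{I}m\,N^j$, and then verifies by an explicit expansion of $[N^j(X,Y),N]$ (a page-long computation using Jacobi for real vector fields and repeated use of $(\mathcal{L}_Nj)$) that \eqref{eq:LieN} forces $\mathcal{I}m\,N^j$ to be a subalgebra. You instead work directly at the level of the complex distribution: you check that \eqref{eq:LieN} is exactly the condition $[\mathcal{T}_j^{1,0},(\mathcal{T}_j^{1,0})^{(1)}]\subset(\mathcal{T}_j^{1,0})^{(1)}$, and then invoke the abstract fact that for $\mathcal{E}=\mathcal{F}+[\mathcal{F},\mathcal{F}]$ the normaliser condition $[\mathcal{F},\mathcal{E}]\subset\mathcal{E}$ already implies $[\mathcal{E},\mathcal{E}]\subset\mathcal{E}$ via a single application of the Jacobi identity. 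This is cleaner and more conceptual; it avoids the long tensor computation entirely and makes transparent why only one bracketing condition is needed. The paper's computation, on the other hand, yields as a by-product an explicit formula for $[N^j(X,Y),N]$ modulo $\mathcal{I}m\,N^j$, which may be of independent use but is not needed for the statement itself.
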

\begin{proof}
All statements except the last one are  direct consequences of the former proposition.
Equation \ref{eq:LieN} is necessary for $\left(\mathcal{T}_j^{1,0}\right)^{(1)}$ to be involutive; it will be sufficient iff it implies that $\mathcal{I}m\, N^j$ is involutive. This is true, because, for any $N\in \mathcal{I}m\, N^j$ and any $X,Y\in {\mathfrak{X}}(M)$ 
\begin{eqnarray*}  
[N^j(X,Y),N]&=& [[jX,jY],N]-[j[jX,Y],N]-[j[X,jY],N]-[[X,Y],N]\\
&=& [[jX,N],jY]+[jX,[jY,N]] -j[[jX,Y],N]+(\mathcal{L}_Nj)([jX,Y])\\
&&-j[[X,jY],N]+(\mathcal{L}_Nj)([X,jY])-[[X,N],Y]-[X,[Y,N]]\\
&=& (\mathcal{L}_Nj)([jX,Y])+(\mathcal{L}_Nj)([X,jY])\\
&&+[[jX,N],jY]+[jX,[jY,N]]
-j[[jX,N],Y]-j[jX,[Y,N]]\\
&&-j[[X,N],jY]-j[X,[jY,N]]-[[X,N],Y]-[X,[Y,N]]\\
&=&(\mathcal{L}_Nj)([jX,Y])+(\mathcal{L}_Nj)([X,jY])\\
&&+[j[X,N],jY]-[(\mathcal{L}_Nj)(X),jY] +[jX,j[Y,N]]-[jX,(\mathcal{L}_Nj)(Y)]\\
&&-j[j[X,N],Y]+j[(\mathcal{L}_Nj)(X),Y]-j[jX,[Y,N]]\\
&&-j[[X,N],jY]-j[X,j[Y,N]]+j[X,(\mathcal{L}_Nj)(Y)]\\
&&-[[X,N],Y]-[X,[Y,N]]\\
&=&(\mathcal{L}_Nj)([jX,Y])+(\mathcal{L}_Nj)([X,jY])\\
&&-[(\mathcal{L}_Nj)(X),jY]+j[(\mathcal{L}_Nj)(X),Y]\\
&&-[jX,(\mathcal{L}_Nj)(Y)]+j[X,(\mathcal{L}_Nj)(Y)]\\
 && +N^j([X,N],Y) + N^j(X,[Y,N])\\
 &=&(\mathcal{L}_Nj)([jX,Y])+(\mathcal{L}_Nj)([X,jY])\\
 && -(\mathcal{L}_{(\mathcal{L}_Nj)(X)}j)(Y)+(\mathcal{L}_{(\mathcal{L}_Nj)(Y)}j)(X)\\
  && +N^j([X,N],Y) + N^j(X,[Y,N])\\
\end{eqnarray*}
which obviously belongs to $\mathcal{I}m\, N^j$, when $(\mathcal{L}_Nj)(X)\in \mathcal{I}m\, N^j$ for any $N\in \mathcal{I}m\, N^j$.
\end{proof}
\begin{rem}
If the $k$-th derived distribution $\left(\mathcal{T}_j^{1,0}\right)^{(k)}=:\mathcal{T}_j^{1,0}\oplus A^-(\mathcal{D}_j^{(k)})$ is involutive and regular, then  $\mathcal{D}_j^\infty=\mathcal{D}_j^{(k)}$ defines a foliation.\\
In  a homogeneous context, with a $G$-invariant almost complex structure $j$ on a $G$-homogeneous space $M$, each derived distribution is $G$-invariant and regular, so there is always an integer $k$ such that $\left(\mathcal{T}_j^{1,0}\right)^{(k)}$ is involutive. In that context $\mathcal{D}_j^\infty=\mathcal{D}_j^{(k)}$ is a smooth real regular involutive distribution which defines a foliation on $M$.
\end{rem}


\section{$\mathcal{D}$-transverse structures}\label{section:transverse}

Let  $\mathcal{D}=\Gamma^\infty(D)$ be the space of smooth sections  of a real smooth  involutive distribution $D$ (not necessarily of constant rank) on a manifold $M$, or an involutive  limit  of an increasing sequence of such spaces of sections
$\mathcal{D}=\cup_k \mathcal{D}^{(k)}$ with $\mathcal{D}^{(k)}=\Gamma^\infty(D^{(k)})$  and  $ \mathcal{D}^{(k)}+[\mathcal{D}^{(k)},\mathcal{D}^{(k)}]\subset \mathcal{D}^{(k+1)}$, as, for example, the space $\mathcal{D}_j^\infty$ defined in \ref{def:limit}.\\[2mm]
We are going to define $\mathcal{D}$-transverse objects. We start with the particular ideal situation where $\mathcal{D}$ is the space of sections of a distribution $D$ which 
is   regular (i.e. of constant dimension) and such that the space of leaves for  $D$, denoted by $M/D$, has a manifold structure making the canonical projection  $p: M \rightarrow M/D$  a submersion.  In that case, $\mathcal{D}$-transverse objects  translate, at the level of $M$, corresponding objects on the space of leaves $M/D$. We then extend the definitions to our more general setting for $\mathcal{D}$.
\subsection{$\mathcal{D}$-transverse vector fields} 
In the particular ideal situation described above, 
 one can consider the pullback of the tangent bundle $T(M/D)$ and clearly 
$$p^*T\left(M/D\right)\simeq TM/D=:Q.$$ 
The bundle $Q$ is called the normal bundle and one denotes by   $\Pi :TM\rightarrow Q$  the canonical projection.
There is an action  of $\mathcal{D}$  on sections of $Q$: for $F\in \mathcal{D}$ and $u\in \Gamma^\infty(M,Q)$;   
$$L^Q_F u:=\Pi ([F, U]),$$ 
if $U\in {\mathfrak{X}}(M)$ is a lift of $u$ in the sense that $\Pi \circ U=u$. 
A vector field on $M/D$ can be viewed as a section $u$ of $Q$ which is ``constant'' along the leaves  in the sense  that $L^Q_F u=0$  for $F\in \mathcal{D}$. 

In our  general setting we define a transverse vector field as follows.
\begin{definition}
A   $\mathcal{D}$-{{\emph{transverse vector field}}} is an equivalence class $[U]$ of 
a vector field $U\in {\mathfrak{X}}(M)$ which is  $\mathcal{D}$-{\emph{foliated}} in the sense that 
\begin{equation*}
[F,U]\in\mathcal{D} \quad \textrm{ for any}\quad F\in \mathcal{D},
\end{equation*}
the equivalence  of foliated vector fields being defined by $U\sim U'$ iff $U-U'\in \mathcal{D}$.
\end{definition}
\subsection{$\mathcal{D}$-Transverse almost complex  structures}
 In the ideal situation, an almost complex structure $\widehat{j}$ on $M/D$, gives a section $\widetilde{j}$ of $\End(Q)$ which squares to $-\id$ and which  is ``constant" along the leaves in the sense that
  $$L^{\End(Q)}_{{F}}\widetilde{j}=0\, \textrm{  for any }\, F\in \mathcal{D}, \, \textrm{  where  }\,   L^{\End(Q)}_F \widetilde{j}=L^Q_F\circ \widetilde{j}- \widetilde{j}\circ L^Q_F.$$
A lift of 
$\widetilde{j}$ is a section $k\in \End(TM)$ such that $\widetilde{j}(\Pi(U))=\Pi(kU)$;   
remark that 
\begin{itemize}
\item $\Pi(kF)=0$ for all $F\in \mathcal{D}$  if and only if $k(\mathcal{D})\subset \mathcal{D}$,
\item  $\widetilde{j}^2(\Pi (U))=-\Pi (U)$ iff $\Pi(k^2 U)=-\Pi(U)$ iff $k^2U+U\in \mathcal{D}, \, \forall U\in{\mathfrak{X}}(M)$;  
\item $(L^Q_F\circ \widetilde{j}- \widetilde{j}\circ L^Q_F)\Pi(U)=0=\Pi([F,kU]-k[F,U]) \forall U\in{\mathfrak{X}}(M)$.
 \end{itemize} 

In our  general setting we define a transverse almost complex  structure as follows.\begin{definition}\label{def:transvalmostcplx}
A $\mathcal{D}$-{\emph{transverse almost complex structure}} is the equivalence class $[k]$ of a section $k$ of $\End(TM)$  such that $k(\mathcal{D})\subset \mathcal{D}$, $k^2U+U\in \mathcal{D}, \, \forall U\in{\mathfrak{X}}(M)$, and
$$
({\mathcal{L}}_F k)(U)=[F,kU]-k[F,U] \in  \mathcal{D}\quad  \textrm{ for all  } F\in \mathcal{D},\, U \in {\mathfrak{X}}(M),
$$
the equivalence being defined by $k\sim k'$ iff $\mathcal{I}m\,(k-k')\subset \mathcal{D}$.
 \end{definition}
\subsection{$\mathcal{D}$-transverse complex structures}
In the ideal situation, an almost complex structure $\widehat{j}$ on $M/D$ is integrable if and only if its Nijenhuis tensor vanishes. This will be true if and only if the torsion $N^{\widetilde{j}}$
corresponding to the section $\widetilde{j}$ of $\End(Q)$, $N^{\widetilde{j}} (u,v ):= [\widetilde{j}u, \widetilde{j}v ] -\widetilde{j}[\widetilde{j}u,v ]-\widetilde{j} [u, \widetilde{j}v ] -[u,v]$ vanishes  for any $u,v$ sections  of $Q$ corresponding to vector fields on $M/D$, i.e. such that $L^Q_F u=0$ and $L^Q_F v=0$  for $F\in \mathcal{D}$. Another way to formulate this condition is that  a lift $k\in \End(TM)$ of
$\widetilde{j}$ must satisfy
$$\Pi\left([kU, kV ] -k[kU,V ]-k [U, kV ] +k^2[U,V]\right) =\Pi\left(N^k(U,V)\right)=0$$ 
for any $U,V$ foliated vector fields. Since the torsion $N^k$ of $k$  is a tensor and since the value at a point of foliated vector fields generate the whole tangent space to $M$ at that point in this ideal situation, it is equivalent to ask that $N^k$ takes its values in $\mathcal{D}$.

 In our  general setting we define a transverse  complex  structure as follows.
 \begin{definition}\label{def:transvcplx}
A $\mathcal{D}$-{\emph{transverse  complex structure}} is a $\mathcal{D}$-transverse almost complex structure  $[k]$ 
(i.e. $k$ is a section of $\End(TM)$, such that
 $k(\mathcal{D})\subset \mathcal{D}$, $k^2U+U\in \mathcal{D}$, and
$ [F,kU]-k[F,U] \in  \mathcal{D}\,\textrm{ for all  } F\in \mathcal{D}$ and all $ U \in {\mathfrak{X}}(M)$) 
which has the property that 
$$
N^k(U,V)=[kU, kV ] -k[kU,V ]-k [U, kV ] +k^2[U,V] \in \mathcal{D}, \textrm{ for all  } U,V \in {\mathfrak{X}}(M).
$$
 \end{definition}  
\subsection{$\mathcal{D}$-transverse forms}
 \begin{definition}
A $\mathcal{D}$-{\emph{transverse -or basic- real or complex  $p$-form }} is  a $p$-form $\omega$ on $M$ such that
$$ \iota(F)\omega=0 \qquad \textrm{and}\qquad {\mathcal{L}}_F\omega=0 \qquad \forall F \in \mathcal{D}.
$$
If the section  $k$ of $\End(TM)$ defines a $\mathcal{D}$-transverse almost complex structure,
a complex $\mathcal{D}$-transverse $1$-form $\omega $ is {\emph{of type $(1,0)$ --resp. $(0,1)$--}} iff 
$$
\omega(U+ikU)=0, \qquad  \textrm{-- resp.}\,\,  \omega(U-ikU)=0 \qquad \forall U\in {\mathfrak{X}}(M).
$$
  \end{definition}
  \begin{prop} Given a $\mathcal{D}$-transverse almost complex structure $[k]$,
there is a  splitting of complex $\mathcal{D}$-transverse p-forms as a direct sum
$$\Omega^\ell_\mathcal{D}(M)=\oplus_{p+q=\ell}\Omega_{\mathcal{D},[k]}^{p,q}.$$
\end{prop}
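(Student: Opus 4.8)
The plan is to mimic, at the level of $\mathcal{D}$-transverse forms, the familiar bidegree decomposition $\Lambda^\ell (T^*M)^\C = \oplus_{p+q=\ell}\Lambda^{p,0}\otimes\Lambda^{0,q}$ associated to an honest almost complex structure, using the section $k$ as a representative of the class $[k]$ and then checking independence of the representative. First I would fix a representative $k$ and define, for a complex $\mathcal{D}$-transverse $\ell$-form $\omega$, its $(p,q)$-component by the standard averaging/projection formula: using the two complementary projectors $\pi^\pm = \half(\id \mp i k)$ on $TM^\C$ one sets, for each multi-index splitting of the $\ell$ slots into $p$ ``$(1,0)$'' slots and $q$ ``$(0,1)$'' slots, $\omega^{p,q}(X_1,\dots,X_\ell) = \binom{\ell}{p}^{-1}\sum \omega(\pi^\pm X_1,\dots)$ over all ways of assigning $p$ of the arguments the projector $\pi^+$ and $q$ the projector $\pi^-$; equivalently $\omega^{p,q}$ is the $(p+q)$-eigenform-style component picked out by the operator $\tfrac12(\mathcal{L} + \text{precomposition with }k)$ acting slotwise, exactly the operation "$\omega\circ k_r$" mentioned in the introduction. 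One then has $\omega = \sum_{p+q=\ell}\omega^{p,q}$ by a purely pointwise, multilinear-algebra computation, and the components in different bidegrees are linearly independent because they are eigenforms for distinct eigenvalues of the natural $S^1$-action (or: the decomposition of $\Lambda^\ell$ at a point is a genuine direct sum once one works modulo the annihilator of $\mathcal{D}$).

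Second, I would verify that each $\omega^{p,q}$ is again $\mathcal{D}$-transverse. Vanishing of $\iota(F)\omega^{p,q}$ for $F\in\mathcal{D}$ follows from $\iota(F)\omega=0$ together with $k(\mathcal{D})\subset\mathcal{D}$ and $k^2 U + U\in\mathcal{D}$: contracting $\omega^{p,q}$ with $F$ produces terms $\omega(\pi^\pm F,\dots)$, and $\pi^\pm F = \half(F \mp i kF)$ lies in $\mathcal{D}^\C$, so each term vanishes. For ${\mathcal{L}}_F\omega^{p,q}=0$ one uses the Cartan-type identity ${\mathcal{L}}_F(\omega\circ k_r) = ({\mathcal{L}}_F\omega)\circ k_r + \omega\circ({\mathcal{L}}_F k)_r$ on the $r$-th slot; since ${\mathcal{L}}_F\omega=0$ and $\iota(F')\omega = 0$ for $F'\in\mathcal{D}$, and $({\mathcal{L}}_F k)(U)\in\mathcal{D}$ by Definition \ref{def:transvalmostcplx}, the correction term $\omega\circ({\mathcal{L}}_F k)_r$ vanishes as well. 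Hence the bidegree decomposition restricts to $\Omega^\ell_\mathcal{D}(M)$.

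Third, I would check that $\Omega^{p,q}_{\mathcal{D},[k]}$ depends only on the class $[k]$, not on the chosen representative $k$. If $k' = k + e$ with $\mathcal{I}m\,e\subset\mathcal{D}$, then the projectors differ by $\pi'^\pm - \pi^\pm = \mp\tfrac i2 e$, whose image lies in $\mathcal{D}^\C$; since a $\mathcal{D}$-transverse form is annihilated by contraction with any section of $\mathcal{D}^\C$, replacing $\pi^\pm$ by $\pi'^\pm$ in any slot of $\omega$ changes nothing, so $\omega^{p,q}$ computed with $k'$ equals $\omega^{p,q}$ computed with $k$. This gives the well-defined direct sum $\Omega^\ell_\mathcal{D}(M)=\oplus_{p+q=\ell}\Omega_{\mathcal{D},[k]}^{p,q}$.

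The main obstacle I anticipate is not any single computation but getting the bookkeeping right in the non-constant-rank / limit-distribution setting: one must be careful that ``$\mathcal{D}$-transverse'' does not refer to a genuine quotient bundle of fixed rank, so the direct sum and the linear independence of bidegree components have to be argued pointwise on the quotient $T_xM^\C / \mathcal{D}_x^\C$ — where $\mathcal{D}_x$ is the (possibly jumping) subspace — and one needs $k$ to descend to a genuine almost complex structure on that quotient, which is exactly what the three conditions in Definition \ref{def:transvalmostcplx} guarantee. Once it is observed that every $\mathcal{D}$-transverse form factors through $\Lambda^\bullet$ of this quotient and that $k$ acts there as a square-root of $-\id$, the decomposition is the usual one and the proof reduces to the two routine checks (transversality of the components, representative-independence) sketched above.
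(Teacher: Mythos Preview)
Your approach is essentially the paper's: define the $(p,q)$-components by precomposing slotwise with $\pi^\pm=\tfrac12(\id\mp ik)$, verify that each component remains $\mathcal{D}$-transverse via $k(\mathcal{D})\subset\mathcal{D}$ and $(\mathcal{L}_Fk)(U)\in\mathcal{D}$, and check representative-independence using $\mathcal{I}m(k-k')\subset\mathcal{D}$; the paper simply spells this out for $1$-forms and then says ``the same applies'' in higher degree. One small slip to fix: drop the factor $\binom{\ell}{p}^{-1}$ in your formula for $\omega^{p,q}$ --- since $\pi^++\pi^-=\id$, expanding $\omega\bigl((\pi^++\pi^-)X_1,\ldots,(\pi^++\pi^-)X_\ell\bigr)$ already gives $\omega=\sum_{p+q=\ell}\omega^{p,q}$ with $\omega^{p,q}$ the \emph{unnormalized} sum over the $\binom{\ell}{p}$ assignments.
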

\begin{proof}  
Observe that
any complex $\mathcal{D}$-transverse $1$-form $\omega$  splits as $$\omega=\half(\omega-i\,\omega\circ k)+\half(\omega+i\,\omega\circ k),$$
and this decomposition does not depend on the element $k\in [k]$.
The form $\omega\circ k$ is $\mathcal{D}$-transverse since $\omega\circ k(F)=0$ for $F\in \mathcal{D}$ because $kF\in\mathcal{D}$ and 
$${\mathcal{L}}_F(\omega\circ k)={\mathcal{L}}_F(\omega)\circ k +\omega\circ {\mathcal{L}}_F k =0$$ since ${\mathcal{L}}_F k (U)\in \mathcal{D}$ for any $U \in {\mathfrak{X}}(M)$.
Furthermore
$$(\omega\mp i\omega\circ k)(U\pm ikU)=\omega(U+k^2U)+i \omega(\mp kU \pm kU)=0 \quad \forall U \in {\mathfrak{X}}(M).$$
The same applies for complex $\mathcal{D}$-transverse $\ell$-forms. 
\end{proof}



  \subsection{$\mathcal{D}$-transverse Dolbeault cohomology}
Assume again that the section  $k$ of $\End(TM)$ defines a $\mathcal{D}$-transverse almost complex structure.
The differential of a $\mathcal{D}$-transverse form is again  $\mathcal{D}$-transverse, and one has
 $$d\Omega_{\mathcal{D},[k]}^{p,q}\subset \Omega_{\mathcal{D},[k]}^{p+2,q-1}\oplus\Omega_{\mathcal{D},[k]}^{p+1,q}\oplus\Omega_{\mathcal{D},[k]}^{p,q+1}\oplus \Omega_{\mathcal{D},[k]}^{p-1,q+2}.$$ 
Indeed, for $\omega \in \Omega_{\mathcal{D},[k]}^{p,q}$ and for complex vector fields  $Z_i$,  
$$
d\omega(Z_0,..,Z_p)=\sum_r (-1)^r Z_r(\omega(Z_0,..\widehat{Z_r}..,Z_p))+\sum_{r<s} (-1)^{r+s} \omega([Z_r,Z_s],Z_0,..\widehat{Z_r}..\widehat{Z_s}..,Z_p)
$$
 so that, for elements $Z_r={A_k^{\pm}Y_r}=Y_r\mp ikY_r$, it vanishes if there are not at least $p-1$ elements of the form $Z_r={A_k^{+}Y_r}$ and $q-1$ elements of the form  $Z_r={A_k^{-}Y_r}$.
 
 Assume now that the section  $k$ of $\End(TM)$ defines a $\mathcal{D}$-transverse  complex structure.
 Then  the projections of $d\omega$ on $\Omega_{\mathcal{D},[k]}^{p+2,q-1}$ and $\Omega_{\mathcal{D},[k]}^{p-1,q+2}$ vanish; indeed
 \begin{eqnarray*}
 &&d\omega(A_k^+X_1,..., A_k^+X_{p-1},A_k^-Y_1,..A_k^-Y_{q+2})\\
 &=& \sum\pm\omega([A_k^-Y_r,A_k^-Y_s],A_k^+X_1,..., A_k^+X_{p-1},A_k^-Y_1,..\widehat{A_k^-Y_r}..\widehat{A_k^-Y_s}..,A_k^-Y_{q+2})=0
  \end{eqnarray*}
 because, 
  \begin{eqnarray*}[Y_r + ikY_r, Y_s+ikY_s]&=&[Y_r, Y_s]-[kY_r, kY_s]+i[Y_r, kY_s]+i[kY_r, Y_s]
 \\
 &=&A_k^+([Y_r, Y_s]-[kY_r, kY_s]+k[Y_r, kY_s]+k[kY_r, Y_s])+\\
 && \quad A_k^-([Y_r, Y_s]-[kY_r, kY_s]-k[Y_r, kY_s]-k[kY_r, Y_s])\\
 && \quad +i(k^2+\Id)([Y_r, kY_s]+[kY_r, Y_s])\\
 &=&  A_k^+(-N^k(Y_r,Y_s))+A_k^-(...) + F \,\textrm{ with }\, F\in \mathcal{D}^\mathbb{C}
 \end{eqnarray*}
and $N^k(Y_r,Y_s)$ is in $\mathcal{D}$ when $k$ defines a complex transverse structure.
 
 \bigskip
 Hence 
 $$d\Omega_{\mathcal{D},[k]}^{p,q}\subset \Omega_{\mathcal{D},[k]}^{p+1,q}\oplus\Omega_{\mathcal{D},[k]}^{p,q+1},$$ 
and we  denote  by $\partial_{\mathcal{D},[k]}$ and $\bar{\partial}_{\mathcal{D},[k]}$ the corresponding projections.
\begin{definition}
The  $\mathcal{D}$-transverse Dolbeault cohomology induced by $[k]$ is 
$$H_{\mathcal{D},[k],\bar{\partial}}^{p,q}(M)=\Ker \bar{\partial}_{\mathcal{D},[k]}\vert_{\Omega_{\mathcal{D},[k]}^{p,q}}/\Image \bar{\partial}_{\mathcal{D},[k]}\vert_{\Omega_{\mathcal{D},[k]}^{p,q-1}}.$$

 \end{definition} 
 
\begin{rem}
In the ideal situation where $D$ is regular and there is a manifold structure on the space of leaves $M/D$ such that $p: M \rightarrow M/D$ is a submersion, if $[k]$ corresponds to a complex structure $\widehat{j}$ on $M/D$, then the transverse Dolbeault cohomology is the usual Dolbeault cohomology on the space of leaves:
 $$H_{\mathcal{D},[k],\bar{\partial}}^{p,q}(M)=H_{\widehat{j},\bar{\partial}}^{p,q}(M/D).$$ 
\end{rem}
 
\section {Transverse Dolbeault cohomology. }\label{section:transverse Dolbeault cohomology}

\subsection{Transverse complex structure induced by an almost complex structure}
Let $j$ be an almost complex structure on the manifold $M$ and let $\mathcal{D}$ be a real generalized involutive distribution,  stable under $j$; by  this we mean that $\mathcal{D}$ can be the space of smooth sections  of a  smooth  involutive distribution $D$ (not necessarily of constant rank), stable under $j$,  or -as before- can be the involutive  limit  of a sequence of nested spaces of sections.  

Following  definitions  \ref{def:transvalmostcplx} and \ref{def:transvcplx}, this $j$ yields a $\mathcal{D}$-transverse almost complex structure iff
 $$
[F,jU]-j[F,U] \in \mathcal{D} \quad \forall F\in \mathcal{D}, \quad \forall U\in {\mathfrak{X}}(M),
$$
and a complex $\mathcal{D}$-transverse structure iff, furthermore, $\mathcal{D}$ contains the image of $N^j$.
 \begin{prop}\label{prop:transvj}
$j$ defines a complex $\mathcal{D}$-transverse structure iff
$$
\mathcal{T}_j^{1,0}+\mathcal{D}=\mathcal{T}_j^{1,0}\oplus A^-(\mathcal{D})=\mathcal{T}_j^{1,0}+\mathcal{D}^\C  \quad {\textrm{is involutive }}.
$$
  Then  $\mathcal{D} \supset \mathcal{D}_j^\infty=\cup_k \mathcal{D}_j^{(k)}\supset\Image N^j$.  
 \end{prop}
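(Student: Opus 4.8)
The plan is to unwind the definitions and reduce the equivalence to the computation already carried out in Section \ref{section:derived}. First, recall from equation (\ref{eq:decomp}) that any complex vector field decomposes as $X+iW = A^+(X+jW)+A^-(X-jW)$, so that $\mathcal{T}_j^{1,0}+\mathcal{D}^\C = \mathcal{T}_j^{1,0}\oplus A^-(\mathcal{D})$; the direct sum is genuine because $\mathcal{T}_j^{1,0}\cap\overline{\mathcal{T}_j^{1,0}}=0$ and $A^-(\mathcal{D})\subset \mathcal{T}_j^{0,1}$. This gives the chain of equalities in the displayed formula with no content beyond bookkeeping, so the real point is the equivalence ``$j$ defines a complex $\mathcal{D}$-transverse structure $\iff$ $\mathcal{T}_j^{1,0}+\mathcal{D}^\C$ is involutive.'' By Definition \ref{def:transvcplx} applied to $k=j$ (using that $j(\mathcal{D})\subset\mathcal{D}$ since $\mathcal{D}$ is $j$-stable, and $j^2U+U=0\in\mathcal{D}$ trivially), ``$j$ defines a complex $\mathcal{D}$-transverse structure'' means precisely two conditions: $({\mathcal{L}}_Fj)(U)=[F,jU]-j[F,U]\in\mathcal{D}$ for all $F\in\mathcal{D}$, $U\in{\mathfrak{X}}(M)$, and $N^j(U,V)\in\mathcal{D}$ for all $U,V\in{\mathfrak{X}}(M)$.

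Next I would check the two implications. For ($\Leftarrow$): if $\mathcal{T}_j^{1,0}+\mathcal{D}^\C$ is involutive, take $F\in\mathcal{D}$ and $U\in{\mathfrak{X}}(M)$; then $A^+U=U-ijU\in\mathcal{T}_j^{1,0}$ and $F\in\mathcal{D}^\C$, so $[A^+U,F]$ lies in $\mathcal{T}_j^{1,0}+\mathcal{D}^\C$. Using the formula from the proof of Proposition \ref{prop:kderived}, $[U-ijU,F] = A^+([U,F]-j[jU,F]) + A^-\big(({\mathcal{L}}_Fj)(jU)\big)$; the $A^-$-component must therefore lie in $A^-(\mathcal{D})$, which forces $({\mathcal{L}}_Fj)(jU)\in\mathcal{D}$, and since $j$ is bijective and $j$-stability of $\mathcal{D}$ lets us replace $jU$ by an arbitrary vector field, we get ${\mathcal{L}}_Fj$ maps into $\mathcal{D}$. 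Similarly, taking $U,V\in{\mathfrak{X}}(M)$ and bracketing $A^+U,A^+V\in\mathcal{T}_j^{1,0}$, equation (\ref{eq:bracketj}) with $\epsilon=\epsilon'=1$ shows the $A^-$-component of $[A^+U,A^+V]$ is $A^-(-N^j(U,V))$, so involutivity forces $N^j(U,V)\in\mathcal{D}$. For ($\Rightarrow$): conversely, assuming both conditions, a general element of $\mathcal{T}_j^{1,0}+\mathcal{D}^\C$ is of the form $A^+U+F$ with $F\in\mathcal{D}^\C$; expand $[A^+U+F, A^+V+G]$ by bilinearity into $[A^+U,A^+V]$, $[A^+U,G]$, $[F,A^+V]$, $[F,G]$. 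The last is in $\mathcal{D}^\C$ by involutivity of $\mathcal{D}$; the middle two are in $\mathcal{T}_j^{1,0}+A^-(\mathcal{D})$ by the ${\mathcal{L}}_Fj$-computation just used; and $[A^+U,A^+V]$ is in $\mathcal{T}_j^{1,0}+A^-(\Image N^j)\subset \mathcal{T}_j^{1,0}+\mathcal{D}^\C$ by (\ref{eq:bracketj}) together with $N^j(U,V)\in\mathcal{D}$. Hence the sum is closed under bracket.

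Finally, for the inclusion $\mathcal{D}\supset\mathcal{D}_j^\infty\supset\Image N^j$: the second inclusion is immediate since $\mathcal{D}_j^{(1)}=\mathcal{I}m\,N^j$ by Proposition \ref{prop:kderived}. For the first, I would argue by induction on $k$ that $\mathcal{D}_j^{(k)}\subset\mathcal{D}$, using the inductive description $\mathcal{D}_j^{(k+1)}=\mathcal{D}_j^{(k)}+\sum_{U\in\mathcal{D}_j^{(k)}}(\mathcal{I}m\,{\mathcal{L}}_Uj)+[\mathcal{D}_j^{(k)},\mathcal{D}_j^{(k)}]$ from Proposition \ref{prop:kderived}: the base case $\mathcal{D}_j^{(1)}=\Image N^j\subset\mathcal{D}$ holds because $\mathcal{D}$ contains $\Image N^j$ (part of the definition of complex $\mathcal{D}$-transverse structure, or equivalently read off from the $N^j$-condition above); and if $\mathcal{D}_j^{(k)}\subset\mathcal{D}$ then each of the three summands lands in $\mathcal{D}$ — the first trivially, the second because ${\mathcal{L}}_Uj$ maps into $\mathcal{D}$ for $U\in\mathcal{D}$ (the ${\mathcal{L}}_Fj$-condition), and the third because $\mathcal{D}$ is involutive. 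Taking the union over $k$ gives $\mathcal{D}_j^\infty\subset\mathcal{D}$.

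I expect the only mild subtlety — not a real obstacle — to be the passage from ``$({\mathcal{L}}_Fj)(jU)\in\mathcal{D}$ for all $U$'' to ``$({\mathcal{L}}_Fj)(U)\in\mathcal{D}$ for all $U$,'' which uses that $j$ is a bundle automorphism, and keeping careful track of which bracket formula ((\ref{eq:decomp}), (\ref{eq:bracketj}), or the one in the proof of Proposition \ref{prop:kderived}) applies in each case; everything else is a direct translation of Definitions \ref{def:transvalmostcplx}--\ref{def:transvcplx} through the already-established bracket identities.
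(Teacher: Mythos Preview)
Your proposal is correct and follows exactly the approach the paper intends: the paper's own proof is the single line ``This results directly from the computations made in the proof of proposition \ref{prop:kderived},'' and what you have done is simply spell out those computations (equations (\ref{eq:decomp}), (\ref{eq:bracketj}), and the bracket identity in the proof of Proposition \ref{prop:kderived}) together with the inductive description of $\mathcal{D}_j^{(k+1)}$ to get the final inclusion. One cosmetic slip: $A^+U=\half(U-ijU)$ rather than $U-ijU$, but the scalar factor is irrelevant to membership in $\mathcal{T}_j^{1,0}$ and does not affect the argument.
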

 \begin{proof}
This results directly from the computations made in the proof of proposition \ref{prop:kderived}.
\end{proof}
 In particular $j$ defines  a complex structure transverse to $\mathcal{D}_j^\infty$ and a corresponding $\mathcal{D}_j^\infty$-transverse Dolbeault cohomology. The splitting of $\mathcal{D}_j^\infty$-transverse forms corresponds to the usual splitting of forms on $M$ relatively to $j$,
 $\Omega^k(M,{\mathbb{C}})=\oplus_{p+q=k}\Omega_j^{p,q}$, restricted to $\mathcal{D}_j^\infty$-transverse forms.
 
 \subsection {Transverse Dolbeault cohomology and $(p,0)$-spaces. }
 \begin{definition}
 The {\emph{ $j$-transverse Dolbeault cohomology}} is the $\mathcal{D}_j^\infty$-transverse Dolbeault cohomology induced by $j$
  $$H_{j\, trans}^{p,q}(M):=H_{\mathcal{D}_j^\infty,[j],\bar{\partial}}^{p,q}(M)=\Ker \bar{\partial}_{\mathcal{D}_j^\infty,[j]}\vert_{\Omega_{\mathcal{D}_j^\infty,[j]}^{p,q}}/\Image \bar{\partial}_{\mathcal{D}_j^\infty,[k]}\vert_{\Omega_{\mathcal{D}_j^\infty,[j]}^{p,q-1}},$$
and  $\Omega_{\mathcal{D}_j^\infty,[j]}^{p,q}=\{ \, \omega \in \Omega_j^{p,q}\,\vert\, \iota(X)\omega=0,\, \mathcal{L}_X\omega=0,\, \forall X \in D_j^\infty\,\}.$
\end{definition}

\begin{rem}
If $\mathcal{D}_j^\infty$ is regular, it defines a foliation with transverse complex structure induced by $j$.
If its space of leaves has a manifold structure making the canonical projection a submersion, then the $j$ transverse Dolbeault cohomology is the Dolbeault cohomology of this space of leaves.
Recall that for a $G$-invariant almost complex structure $j$ on a $G$-homogeneous space $M$, each derived distribution is $G$-invariant and regular, so  $\mathcal{D}_j^\infty$  always defines a foliation. 
\end{rem}

 \begin{prop} Let $\mathcal{D}$ be any regular involutive $j$-stable distribution  such that the space of leaves $M/D$ has a manifold structure with 
 $p:M\rightarrow M/D$ a submersion, and such that $j$ induces a complex structure on $M/D$.
 The $\mathcal{D}$-transverse Dolbeault cohomology induced by $j$ coincides with the Dolbeault cohomology of the space of leaves, and maps into the $j$-transverse Dolbeault cohomology.
 \end{prop}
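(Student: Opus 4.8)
The plan is to treat the two assertions separately. For the first, that $H_{\mathcal{D},[j],\bar\partial}^{p,q}(M)$ coincides with $H_{\widehat{j},\bar\partial}^{p,q}(M/D)$, I would set up a pullback isomorphism $p^{*}$ at the level of cochain complexes. First I would recall the classical fact that, since $D$ is regular with leaf space a manifold and $p\colon M\to M/D$ a submersion, $p^{*}$ is an injection from $\Omega^{\ell}(M/D,\mathbb{C})$ onto the space of \emph{basic} $\ell$-forms, i.e. those $\omega$ with $\iota(F)\omega=0$ and $\mathcal{L}_{F}\omega=0$ for all $F\in\mathcal{D}$; surjectivity is a local statement, proved using foliated charts in which the leaves are the fibres of a coordinate projection. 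Next I would observe that, because $j$ is $\mathcal{D}$-foliated and descends to the complex structure $\widehat{j}$ on $M/D$ — the value of a basic form at $x$ depends only on $T_{x}M/D_{x}\cong p^{*}T_{p(x)}(M/D)$, on which $j$ acts as $\widehat{j}$ — the isomorphism $p^{*}$ carries $\Omega_{\widehat{j}}^{p,q}(M/D)$ onto $\Omega_{\mathcal{D},[j]}^{p,q}(M)$. Since $p^{*}d=dp^{*}$, since $d$ preserves basic forms (noted in Section \ref{section:transverse}), and since integrability of $\widehat{j}$ gives $d=\partial\oplus\bar\partial$ on $M/D$, the operator $\bar\partial_{\mathcal{D},[j]}$ is intertwined by $p^{*}$ with $\bar\partial$ on $(M/D,\widehat{j})$; passing to cohomology yields the claimed equality.

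For the second assertion I would first note that the hypothesis that $j$ induces a complex structure on $M/D$ means exactly that $[j]$ is a $\mathcal{D}$-transverse complex structure (in the sense of Definitions \ref{def:transvalmostcplx} and \ref{def:transvcplx}), so Proposition \ref{prop:transvj} gives $\mathcal{D}\supseteq\mathcal{D}_j^\infty$. Consequently every $\mathcal{D}$-transverse form is \emph{a fortiori} $\mathcal{D}_j^\infty$-transverse, which produces an inclusion $\Omega_{\mathcal{D},[j]}^{p,q}(M)\hookrightarrow\Omega_{\mathcal{D}_j^\infty,[j]}^{p,q}(M)$ that is compatible with the type splitting — both splittings are induced by the one almost complex structure $j$ on $M$ — and with $\bar\partial$, since in both cases $\bar\partial$ is the $(p,q+1)$-component of the single exterior differential $d$ on $M$. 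This inclusion of cochain complexes then induces the asserted map $H_{\mathcal{D},[j],\bar\partial}^{p,q}(M)\to H_{j\,trans}^{p,q}(M)$.

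The only genuinely non-formal step is the first one: establishing that $p^{*}$ identifies forms on $M/D$ with basic forms on $M$, and that this identification respects the bidegree decomposition. This is standard foliation theory (basic cohomology of a simple foliation), but it is precisely where the regularity of $D$, the manifold structure on $M/D$, and the submersivity of $p$ are all used; everything after that is bookkeeping with the $(p,q)$-decomposition together with the fact, already established in Section \ref{section:transverse}, that a transverse complex structure makes $d$ split as $\partial_{\mathcal{D},[k]}\oplus\bar\partial_{\mathcal{D},[k]}$ on transverse forms.
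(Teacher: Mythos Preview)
The paper states this proposition without proof, treating both claims as immediate consequences of the preceding material: the first was already recorded as the Remark closing Section~\ref{section:transverse}, and the second follows directly from Proposition~\ref{prop:transvj} (which gives $\mathcal{D}\supseteq\mathcal{D}_j^\infty$ once $j$ defines a $\mathcal{D}$-transverse complex structure). Your argument supplies precisely these details---the pullback identification of basic forms with forms on $M/D$ respecting bidegree, and the inclusion of cochain complexes coming from $\mathcal{D}_j^\infty\subseteq\mathcal{D}$---so your approach is correct and matches the paper's implicit reasoning.
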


 \begin{prop}
 The space $\Omega_{\mathcal{D}_j^\infty}(M):=\oplus_{p,q}\Omega_{\mathcal{D}_j^\infty,[j]}^{p,q}$ is the smallest $C^\infty(M,\mathbb{C})$-submodule  of the space $\Omega(M,\mathbb{C})$ of smooth complex forms on the manifold such that
 \begin{itemize}
 \item each form $\omega$ in it  vanishes whenever contracted with a vector field in the image of the Nijenhuis tensor of $j$, 
 $$
\iota(X)\omega=0,\,  \forall X \in \mathcal{I}m N^j;
 $$
 \item it is stable under the differential $d$; 
\item it splits into $(p,q)$ components relatively to $j$,
in the sense that for $\omega$  a $k$-form in the submodule, $\omega\circ j_r$ is in the submodule for any $r\le k$, where $\circ j_r$ indicates the precomposition with $j$ acting on the $r$-th argument of $\omega$.
\end{itemize}
\end{prop}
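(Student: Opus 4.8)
The plan is to establish the characterization through the two inclusions that together single out $\Omega_{\mathcal{D}_j^\infty}(M)$ among submodules with the three listed properties, the minimality (``smallest'') assertion being the inclusion $\Omega_{\mathcal{D}_j^\infty}(M)\subseteq\mathcal{M}$ for every admissible $\mathcal{M}$. First I would record that $\Omega_{\mathcal{D}_j^\infty}(M)$ itself enjoys the three properties. Its elements are annihilated by $\iota(X)$ for $X\in\mathcal{I}m\, N^j$ simply because $\mathcal{I}m\, N^j=\mathcal{D}_j^{(1)}\subseteq\mathcal{D}_j^\infty$. It is $d$-stable: this is exactly the computation giving $d\,\Omega_{\mathcal{D}_j^\infty,[j]}^{p,q}\subseteq\Omega_{\mathcal{D}_j^\infty,[j]}^{p+1,q}\oplus\Omega_{\mathcal{D}_j^\infty,[j]}^{p,q+1}$, which says the differential of a transverse form is again transverse. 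Finally it is stable under $\circ\, j_r$: if $\omega$ is transverse and $X\in\mathcal{D}_j^\infty$ then $jX\in\mathcal{D}_j^\infty$ by $j$-stability, so $\iota(X)(\omega\circ j_r)=0$, while $\mathcal{L}_X(\omega\circ j_r)=(\mathcal{L}_X\omega)\circ j_r+\omega\circ(\mathcal{L}_X j)_r$ vanishes because $\mathcal{L}_X\omega=0$ and, by Proposition \ref{prop:t1invol}, $\mathcal{I}m\,(\mathcal{L}_X j)\subseteq\mathcal{D}_j^\infty$ feeds a $\mathcal{D}_j^\infty$-vector into the transverse form $\omega$. The coefficient ring to keep in mind here is that of $\mathcal{D}_j^\infty$-basic functions, since multiplication by a non-basic function destroys the Lie-derivative condition; I would make this the operative notion of submodule.

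For the minimality inclusion $\Omega_{\mathcal{D}_j^\infty}(M)\subseteq\mathcal{M}$ the strategy is to manufacture every transverse form out of the closure operations defining $\mathcal{M}$ --- multiplication by (basic) functions, application of $d$, and precomposition with $j$ --- starting from transverse functions and one-forms. Concretely, a transverse $0$-form $f$ lies in $\mathcal{M}$, so $df=\partial f+\bar\partial f\in\mathcal{M}$ by $d$-stability, and its $(1,0)$- and $(0,1)$-parts lie in $\mathcal{M}$ by the $(p,q)$-splitting property; higher transverse forms are then produced as $d$ of lower-degree members and as basic-function combinations of exterior products of such $\partial f_i,\bar\partial f_j$. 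To see that this exhausts $\Omega_{\mathcal{D}_j^\infty}(M)$ I would pass to the open set where $\mathcal{D}_j^\infty$ has locally constant rank: there foliated coordinates present a local submersion onto the leaf space carrying the complex structure induced by $j$ (cf. the regular case discussed in the remark following Proposition \ref{prop:t1invol}), and every transverse $(p,q)$-form is a basic-function combination of wedges of holomorphic and antiholomorphic differentials of basic coordinate functions. A partition of unity subordinate to such charts then globalizes the conclusion.

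The step I expect to be the main obstacle is precisely this last generation/surjectivity claim, for two reasons. First, one must secure the \emph{seed}: that $\mathcal{M}$ really contains enough low-degree transverse forms (the transverse functions and their $\partial,\bar\partial$) to start the induction; this is where the intended nondegeneracy of the admissible submodules must be used, and it is the point at which the literal ``smallest'' reading is delicate. Second, and more seriously, the foliated normal form is available only on the regular locus of $\mathcal{D}_j^\infty$, whereas $\mathcal{D}_j^\infty$ is in general only a generalized distribution whose rank may jump; across this singular set the coordinate description breaks down, so the plan is to run the argument on the open dense regular locus and then control the extension of the generated forms across the singular set by a continuity/density argument, using that both the transverse forms and the members of $\mathcal{M}$ are smooth on all of $M$. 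Carrying out this limiting step rigorously, in tandem with the inductive propagation from $\mathcal{D}_j^{(1)}=\mathcal{I}m\,N^j$ to the whole limit $\mathcal{D}_j^\infty$ modelled on the construction in Proposition \ref{prop:kderived}, is the crux of the proof.
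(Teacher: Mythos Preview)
Your proof goes in the wrong direction. The paper's argument shows that $\Omega_{\mathcal{D}_j^\infty}(M)$ is the \emph{largest} $C^\infty(M,\mathbb{C})$-submodule with the three listed properties (the word ``smallest'' in the statement is a slip of the pen): it first checks that $\Omega_{\mathcal{D}_j^\infty}(M)$ itself satisfies the three conditions, and then proves by induction on $k$ that \emph{any} submodule $\mathcal{M}$ satisfying them is contained in $\{\omega:\iota(X)\omega=0,\ \mathcal{L}_X\omega=0\ \forall X\in\mathcal{D}_j^{(k)}\}$ for every $k$, hence in $\Omega_{\mathcal{D}_j^\infty}(M)$. The inductive step is short: if $\iota(X)\omega=0$ for all $X\in\mathcal{D}_j^{(k)}$, then $d$-stability gives $\mathcal{L}_X\omega=\iota(X)\,d\omega=0$, whence $\iota([X,X'])\omega=0$; and $j_r$-stability gives $\omega\circ(\mathcal{L}_Xj)_r=0$, whence $\iota((\mathcal{L}_Xj)Y)\omega=0$. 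These two together yield $\iota(Y)\omega=0$ for all $Y\in\mathcal{D}_j^{(k+1)}$ by the recursive description of $\mathcal{D}_j^{(k+1)}$ in Proposition~\ref{prop:kderived}.

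Your attempted inclusion $\Omega_{\mathcal{D}_j^\infty}(M)\subseteq\mathcal{M}$ cannot hold in general: the zero submodule trivially satisfies all three conditions, so nothing forces an admissible $\mathcal{M}$ to contain any nonzero form. You sensed this when you worried about ``securing the seed'', and you tried to patch it by replacing the $C^\infty(M,\mathbb{C})$-module structure with the ring of $\mathcal{D}_j^\infty$-basic functions; but that changes the hypothesis stated in the proposition and still does not furnish the seed. The entire ``generate transverse forms from basic coordinates on the regular locus, then extend by density'' program is aimed at the wrong target and should be discarded in favour of the containment $\mathcal{M}\subseteq\Omega_{\mathcal{D}_j^\infty}(M)$, whose proof is the short induction above.
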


\begin{proof}
 All conditions are clearly satisfied by 
 $$
\Omega_{\mathcal{D}_j^\infty}(M)=\oplus_{p,q} \left\{ \, \omega \in \Omega_j^{p,q}\,\vert\, \iota(X)\omega=0,\, \mathcal{L}_X\omega=0,\, \forall X \in D_j^\infty\,\right\}.
 $$
 Reciprocally, one proceeds by induction, showing that a space of forms satisfying all three conditions is included, for all $k$, in
 $$
 \left\{ \, \omega \in \Omega_j^{p,q}\,\vert\, \iota(X)\omega=0,\, \mathcal{L}_X\omega=0,\, \forall X \in D_j^{(k)} \, \right\}
 $$
For a given $k$, let us consider a subspace of forms such that  $\iota(X)\omega=0,\,  \forall X \in \mathcal{D}_j^{(k)}$, remembering that $D_j^{(1)}=\mathcal{I}m N^j$:\\
 The second condition implies that $\iota(X)d\omega=\mathcal{L}_X\omega=0,\,\forall X \in  \mathcal{D}_j^{(k)}$. Thus, one also has that $\iota(Y)\omega =0$ for any $Y\in \mathcal{D}_j^{(k)}+[\mathcal{D}_j^{(k)},\mathcal{D}_j^{(k)}]$ (using the fact that $\iota([X,X'])\omega= (\iota(X)\circ\mathcal{L}_{X'}-\mathcal{L}_{X'}\circ\iota(X))\omega$).\\
 The third condition implies then that $\omega\circ \mathcal{L}_Xj_r=0$ which in turns implies that 
 $\iota(Y)\omega=0$   for all $Y \in  \mathcal{I}m \mathcal{L}_Xj$ when $X\in \mathcal{D}_j^{(k)}$.\\
 This shows that $\iota(Y)\omega=0$  for all $Y \in \mathcal{D}_j^{(k+1)}$ and one proceeds  inductively.
\end{proof}

 \begin{prop} \label{hp0trans}  The $j$-transverse Dolbeault cohomology is given in degree $p,0$  by
$$H_{j\, trans}^{p,0}(M):=\left\{ \omega \in \Omega^p(M,\mathbb{C})\,\vert\, \iota(Z)\omega=0,\, \mathcal{L}_Z\omega=0, \,\forall Z\in \left(\mathcal{T}_j^{0,1}\right)^{\infty}=\mathcal{T}_j^{0,1}\oplus A^-(D_j^\infty)\right\}.$$
\end{prop}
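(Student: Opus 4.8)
The plan is to unwind the definition of $H_{j\,trans}^{p,0}(M)=H_{\mathcal{D}_j^\infty,[j],\bar\partial}^{p,0}(M)$ and reduce the assertion to a statement about the ordinary operators $d$ and $\bar\partial$ on $M$. Since there are no $(p,-1)$-forms, $\Image\bar\partial_{\mathcal{D}_j^\infty,[j]}\vert_{\Omega_{\mathcal{D}_j^\infty,[j]}^{p,-1}(M)}=0$, so $H_{j\,trans}^{p,0}(M)=\Ker\bar\partial_{\mathcal{D}_j^\infty,[j]}\vert_{\Omega_{\mathcal{D}_j^\infty,[j]}^{p,0}(M)}$, where $\Omega_{\mathcal{D}_j^\infty,[j]}^{p,0}(M)$ is the space of $(p,0)$-forms $\omega$ on $M$ with $\iota(X)\omega=0$ and $\mathcal{L}_X\omega=0$ for every $X\in\mathcal{D}_j^\infty$. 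It then remains to pin down which such $\omega$ satisfy $\bar\partial_{\mathcal{D}_j^\infty,[j]}\omega=0$.

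First I would identify $\bar\partial_{\mathcal{D}_j^\infty,[j]}\omega$ with the genuine $\bar\partial\omega$. By Proposition \ref{prop:transvj}, $j$ is a $\mathcal{D}_j^\infty$-transverse complex structure, so, by the discussion preceding the definition of the transverse Dolbeault cohomology, $d\omega$ is again $\mathcal{D}_j^\infty$-transverse and lies in $\Omega_{\mathcal{D}_j^\infty,[j]}^{p+1,0}(M)\oplus\Omega_{\mathcal{D}_j^\infty,[j]}^{p,1}(M)$. On the other hand, globally $d\omega=\bar\mu\omega+\bar\partial\omega+\partial\omega$ (the $\mu$-component lies in $\Omega_j^{p+2,-1}=0$), with these three summands of respective types $(p-1,2)$, $(p,1)$ and $(p+1,0)$. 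Matching bidegrees forces $\bar\mu\omega=0$, $\partial_{\mathcal{D}_j^\infty,[j]}\omega=\partial\omega$ and $\bar\partial_{\mathcal{D}_j^\infty,[j]}\omega=\bar\partial\omega$.

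The key step is to translate $\bar\partial\omega=0$ into a condition on Lie derivatives along $\mathcal{T}_j^{0,1}$. For $\bar Z\in\mathcal{T}_j^{0,1}$, Cartan's formula and $\iota(\bar Z)\omega=0$ give $\mathcal{L}_{\bar Z}\omega=\iota(\bar Z)\,d\omega$; since $\bar\mu\omega=0$ and $\iota(\bar Z)\partial\omega=0$ (because $\partial\omega$ has type $(p+1,0)$), this equals $\iota(\bar Z)\bar\partial\omega$. As $\bar\partial\omega\in\Omega_j^{p,1}$ and a $(p,1)$-form vanishes precisely when all its contractions with sections of $T_j^{0,1}$ vanish, we get $\bar\partial\omega=0\iff\mathcal{L}_{\bar Z}\omega=0$ for all $\bar Z\in\mathcal{T}_j^{0,1}$. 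Hence $H_{j\,trans}^{p,0}(M)$ is the space of $(p,0)$-forms $\omega$ with $\iota(X)\omega=\mathcal{L}_X\omega=0$ for all $X\in\mathcal{D}_j^\infty$ and $\mathcal{L}_{\bar Z}\omega=0$ for all $\bar Z\in\mathcal{T}_j^{0,1}$.

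Finally I would repackage these conditions, using that $\mathcal{D}_j^\infty$ is $j$-stable and that $\iota$ and $\mathcal{L}$ are $\C$-linear in the vector-field argument. Being of type $(p,0)$ is exactly $\iota(\bar Z)\omega=0$ for all $\bar Z\in\mathcal{T}_j^{0,1}$; together with $\iota(X)\omega=0$ for all real $X\in\mathcal{D}_j^\infty$ this amalgamates into $\iota(Z)\omega=0$ for all $Z\in\mathcal{T}_j^{0,1}+(\mathcal{D}_j^\infty)^\C=\left(\mathcal{T}_j^{0,1}\right)^{\infty}$, and the two Lie-derivative requirements amalgamate into $\mathcal{L}_Z\omega=0$ for all $Z\in\left(\mathcal{T}_j^{0,1}\right)^{\infty}$; conversely these two recover all the earlier conditions. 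This yields the stated description. The only point I expect to demand care is the bidegree bookkeeping producing $\bar\mu\omega=0$ and $\bar\partial_{\mathcal{D}_j^\infty,[j]}\omega=\bar\partial\omega$; once that is secured, the rest is a formal play with Cartan's identity and $\C$-linearity, resting on the already-established fact that $d$ maps $\mathcal{D}_j^\infty$-transverse $(p,q)$-forms to transverse forms of type $(p+1,q)\oplus(p,q+1)$ because $j$ induces a transverse complex structure for $\mathcal{D}_j^\infty$.
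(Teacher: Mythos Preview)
Your proof is correct and follows essentially the same route as the paper's: both identify $H_{j\,trans}^{p,0}(M)$ with $\Ker\bar\partial$ on $\mathcal{D}_j^\infty$-basic $(p,0)$-forms, then convert the condition $\bar\partial\omega=0$ into $\mathcal{L}_{\bar Z}\omega=0$ for $\bar Z\in\mathcal{T}_j^{0,1}$, and finally merge the contraction and Lie-derivative conditions over $\mathcal{T}_j^{0,1}+(\mathcal{D}_j^\infty)^{\mathbb{C}}=(\mathcal{T}_j^{0,1})^\infty$. The only stylistic difference is that you invoke Cartan's identity $\mathcal{L}_{\bar Z}\omega=\iota(\bar Z)d\omega$ and the bidegree matching (yielding $\bar\mu\omega=0$ and $\bar\partial_{\mathcal{D}_j^\infty,[j]}\omega=\bar\partial\omega$) in a clean packaged form, whereas the paper writes out the explicit evaluation $d\omega(A^-U,A^+V_1,\ldots)$ and expands it term by term; the underlying computation is the same.
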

\begin{proof}
\begin{eqnarray*}
H_{\mathcal{D}_j^\infty,[j],\bar{\partial}}^{p,0}(M)&=&\left\{  \omega \in \Omega_{\mathcal{D}_j^\infty,[j]}^{p,0}\,\vert\,\bar{\partial}_{\mathcal{D}_j^\infty,[j]}\omega=0 \right\}\\  
&=& \left\{  \omega \in \Omega^p(M,\mathbb{C})\,\vert\, \iota(F)\omega=0,\, \mathcal{L}_F\omega=0 \,\forall F\in \mathcal{D}_j^\infty,\right.\\
&&\hspace{1cm}
\left. \iota(U+ijU)\omega=0 \, \forall U\in {\mathfrak{X}}(M), \qquad  \bar{\partial}_{\mathcal{D}_j^\infty,[j]}\omega=0 \right\}\\
&=&\left\{  \omega \in \Omega^p(M,\mathbb{C})\,\vert\, \iota(Z)\omega=0,\,\forall Z \in \mathcal{T}_j^{0,1}+\mathcal{D}_j^{\infty},\,\, \mathcal{L}_F\omega=0, \,\forall F\in \mathcal{D}_j^{\infty}, \right.\\
&&\hspace{2.4cm}
 \left. d\omega(U+ijU,V_1-ijV_1,\ldots,V_{p-1}-ijV_{p-1})=0 \right\}\\  
&=&\left\{  \omega \in \Omega^p(M,\mathbb{C})\,\vert\,  \iota(Z)\omega=0,\,\forall Z \in \mathcal{T}_j^{0,1}+\mathcal{D}_j^{\infty},\,\, \mathcal{L}_F\omega=0, \,\forall F\in \mathcal{D}_j^{\infty}, \right.\\
&&\hspace{-2cm}
\left. {\tiny{ (A^-U)(\omega(A^+V_1,\ldots,A^+V_{p-1}))-\sum_k(\omega(A^+V_1,\ldots,[A^-U,A^+V_k],\ldots, A^+V_{p-1}))=0 }}\right\} \\  
&=&\left\{  \omega \in \Omega^p(M,\mathbb{C})\,\vert\, \iota(Z)\omega=0,\, \mathcal{L}_Z\omega=0, \,\forall Z\in \left(\mathcal{T}_j^{0,1}\right)^{\infty}=\mathcal{T}_j^{0,1}+\mathcal{D}_j^{\infty}\right\} 
\end{eqnarray*}
\end{proof}

\subsection{Comparison with the generalized Dolbeault cohomology of an almost complex structure}

J.Cirici and S Wilson introduced in \cite{bib:CW} a generalized Dolbeault cohomology associated to an almost complex structure $j$ on a manifold $M$ in the following way.

One decomposes as before  the complexified tangent bundle $TM^\C=T_j^{1,0}\oplus T_j^{0,1}$, into $\pm i $ eigenspaces for $j$, and  the dual decomposition of the complexified cotangent bundle $T^*M^\C=(T^*_j)^{1,0}\oplus (T^*_j)^{0,1}$  leads to the usual decomposition of the space of complex valued $k$-forms on $M$ into
$$\Omega^k(M,{\mathbb{C}})=\oplus_{p+q=k}\Omega_j^{p,q}.$$
Then 
$$d\Omega_j^{p,q}\subset \Omega_j^{p-1,q+2}\oplus\Omega_j^{p,q+1}\oplus\Omega_j^{p+1,q}\oplus\Omega_j^{p+2,q-1}$$
and the differential splits accordingly as
$$d=\bar{\mu}\oplus \bar{\partial}\oplus\partial\oplus \mu .$$
The fact that $d^2=0$ is equivalent to
\begin{eqnarray}
\bar\mu^2 &=& 0,\label{eq:mu2}\\
\bar\mu\circ \bar\partial +\bar\partial\circ \bar\mu &=&0, \label{eq:mud}\\
\bar\mu\circ {\partial}+{\partial}\circ \bar\mu + \bar\partial^2 &=& 0,
 \label{eq:delta2}\\
 \mu\circ \bar{\mu}+\bar{\mu}\circ \mu +\partial\circ \bar{\partial}+\bar{\partial}\circ\partial &=& 0 \nonumber\\
 \mu\circ \bar{\partial}+\bar{\partial}\circ \mu + \partial^2&=&0, \nonumber\\
 \mu\circ \partial +\partial\circ \mu &=& 0,\nonumber\\  
 \mu^2 &=& 0.\nonumber
\end{eqnarray}  
Equation \eqref{eq:mu2} shows that one can define the $\bar\mu$ cohomology spaces:
\begin{equation}
H_{\bar\mu}^{(p,q)}=\Ker\bar\mu\vert_{\Omega_j^{p,q}}/\Image \bar\mu\vert_{\Omega_j^{p+1,q-2}}.
\end{equation}
Equation \eqref{eq:mud} shows that $\bar\partial$ induces a map $\widetilde{\bar\partial}$ on those $\bar\mu$ cohomology space
\begin{equation}
 \widetilde{\bar\partial} :H_{\bar\mu}^{(p,q)}\rightarrow H_{\bar\mu}^{(p,q+1)}: \omega +\Image \bar\mu\mapsto \bar\partial\omega +\Image \bar\mu,
 \end{equation} 
and equation \eqref{eq:delta2} shows that $\left(\widetilde{\bar\partial}\right)^2=0$, so one can look at the corresponding cohomology spaces 
 \begin{equation} 
H_{Dol}^{(p,q)}(M) =\Ker\widetilde{\bar\partial}\vert_{H_{\bar\mu}^{(p,q)}}/\Image \widetilde{\bar\partial}\vert_{H_{\bar\mu}^{(p,q-1)}}
\end{equation}
Those are the spaces of the generalized Dolbeault cohomology.

\begin{prop} The generalized Dolbeault and the transverse Dolbeault cohomology spaces coincide in degrees $(p,0)$ :
$$H_{Dol}^{(p,0)}(M)= H_{j\, trans}^{p,0}(M)$$
\end{prop}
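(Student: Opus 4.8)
The plan is to unwind both sides and show that the degree-$(p,0)$ parts of the two constructions describe literally the same subspace of $\Omega^p(M,\mathbb{C})$. By Proposition \ref{hp0trans}, the transverse side is already known explicitly:
$$
H_{j\,trans}^{p,0}(M)=\left\{\omega\in\Omega^p(M,\mathbb{C})\,\big\vert\,\iota(Z)\omega=0,\ \mathcal{L}_Z\omega=0,\ \forall Z\in\left(\mathcal{T}_j^{0,1}\right)^{\infty}\right\}.
$$
So the real work is to compute $H_{Dol}^{(p,0)}(M)$ and recognize it as the same set. First I would note that in bidegree $(p,0)$ there is no room below: $\Omega_j^{p,-1}=0$, $\Omega_j^{p+1,-2}=0$, so $\bar\mu\vert_{\Omega_j^{p,0}}=0$ automatically (its target $\Omega_j^{p+1,-2}$ is zero) and there are no incoming $\bar\mu$'s either, whence $H_{\bar\mu}^{(p,0)}=\Omega_j^{p,0}$. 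Likewise $H_{\bar\mu}^{(p,-1)}=0$, so $\Image\widetilde{\bar\partial}\vert_{H_{\bar\mu}^{(p,-1)}}=0$ and $H_{Dol}^{(p,0)}(M)=\Ker\widetilde{\bar\partial}\vert_{H_{\bar\mu}^{(p,0)}}$. The map $\widetilde{\bar\partial}:H_{\bar\mu}^{(p,0)}\to H_{\bar\mu}^{(p,1)}=\Omega_j^{p,1}/\Image\bar\mu\vert_{\Omega_j^{p+1,-1}}=\Omega_j^{p,1}/\{0\}=\Omega_j^{p,1}$, so in fact $\widetilde{\bar\partial}=\bar\partial$ on $\Omega_j^{p,0}$ and
$$
H_{Dol}^{(p,0)}(M)=\Ker\bar\partial\vert_{\Omega_j^{p,0}}=\left\{\omega\in\Omega_j^{p,0}\,\big\vert\,\bar\partial\omega=0\right\}.
$$

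The remaining step is purely pointwise/tensorial: show that $\{\omega\in\Omega_j^{p,0}\mid\bar\partial\omega=0\}$ equals the transverse description above. A $p$-form $\omega$ lies in $\Omega_j^{p,0}$ exactly when $\iota(\bar Z)\omega=0$ for all $\bar Z\in\mathcal{T}_j^{0,1}$. The condition $\bar\partial\omega=0$ says the $(p,1)$-component of $d\omega$ vanishes, i.e. $d\omega(A^+X_1,\dots,A^+X_p,A^-Y)=0$ for all $X_i,Y\in\mathfrak{X}(M)$; expanding this Cartan-style expression (as in the last displayed computation of the proof of Proposition \ref{hp0trans}) exactly produces the statement that $\iota(A^-Y)\omega=0$ modulo brackets, together with the Lie-derivative-type terms. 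So I would argue: given $\omega\in\Omega_j^{p,0}$ with $\bar\partial\omega=0$, I must check $\iota(Z)\omega=0$ and $\mathcal{L}_Z\omega=0$ for all $Z\in\left(\mathcal{T}_j^{0,1}\right)^{\infty}=\mathcal{T}_j^{0,1}\oplus A^-(D_j^\infty)$. For $Z\in\mathcal{T}_j^{0,1}$ the contraction vanishes by type, and $\mathcal{L}_Z\omega=\iota(Z)d\omega+d\iota(Z)\omega=\iota(Z)d\omega$; since $d\omega$ has components in $\Omega_j^{p+1,0}\oplus\Omega_j^{p,1}\oplus\Omega_j^{p-1,2}$ (the $(p+2,-1)$-piece $\mu\omega$ vanishes as its target is zero) and $\bar\partial\omega=0$ kills the $(p,1)$-piece, one checks $\iota(Z)d\omega$ has no $(p,0)$-type components, hence vanishes once paired against $A^+$-vectors; the case $Z\in A^-(D_j^\infty)$ is then handled by the inductive structure of $D_j^{(k)}$ exactly as in the comparison with the submodule $\Omega_{\mathcal{D}_j^\infty}(M)$ in the preceding proposition, using that $\bar\mu\omega=0$ automatically in this bidegree. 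Conversely, any $\omega$ in the transverse set is in $\Omega_j^{p,0}$ with $\iota(\bar Z)\omega=0$ and $\mathcal{L}_{\bar Z}\omega=0$ for $\bar Z\in\mathcal{T}_j^{0,1}$, and the latter forces the $(p,1)$-component of $d\omega$ to vanish, i.e. $\bar\partial\omega=0$.

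The main obstacle is the careful bookkeeping in this last pointwise identification: one must be sure that imposing $\bar\partial\omega=0$ together with membership in $\Omega_j^{p,0}$ really forces the \emph{full} transverse conditions $\iota(Z)\omega=\mathcal{L}_Z\omega=0$ for all $Z\in A^-(D_j^\infty)$, and not merely for $Z\in A^-(\mathcal{I}m\,N^j)$. This is where the inductive description of $D_j^{(k)}$ from Proposition \ref{prop:kderived} and the stability-under-$d$ plus $(p,q)$-splitting argument of the previous proposition must be invoked: the vanishing of $\bar\partial\omega$ propagates, via $\iota([X,X'])=\iota(X)\mathcal{L}_{X'}-\mathcal{L}_{X'}\iota(X)$ and via $\omega\circ(\mathcal{L}_Uj)_r=0$, from $\mathcal{D}_j^{(k)}$ to $\mathcal{D}_j^{(k+1)}$ in precisely the same way as in that proof. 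Once this propagation is in place the two sets coincide on the nose and the proposition follows; no quotient needs to be taken on either side because the relevant coboundary spaces are zero in bidegree $(p,0)$.
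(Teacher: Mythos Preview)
There is a genuine gap, and it sits right at the start of your computation of $H_{Dol}^{(p,0)}(M)$. You assert that $\bar\mu\vert_{\Omega_j^{p,0}}=0$ because ``its target $\Omega_j^{p+1,-2}$ is zero''. But with the paper's convention $\bar\mu$ raises the antiholomorphic degree by two: $\bar\mu:\Omega_j^{p,q}\to\Omega_j^{p-1,q+2}$, so on $\Omega_j^{p,0}$ it lands in $\Omega_j^{p-1,2}$, which is \emph{not} zero for $p\ge 1$. What does vanish is the \emph{incoming} image $\bar\mu(\Omega_j^{p+1,-2})$, so one only gets $H_{\bar\mu}^{(p,0)}=\Ker\bar\mu\vert_{\Omega_j^{p,0}}$, not all of $\Omega_j^{p,0}$. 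Hence your identification $H_{Dol}^{(p,0)}(M)=\Ker\bar\partial\vert_{\Omega_j^{p,0}}$ is too large; the correct description is
\[
H_{Dol}^{(p,0)}(M)=\{\omega\in\Omega_j^{p,0}\mid \bar\mu\omega=0,\ \bar\partial\omega=0\}.
\]
The condition $\bar\mu\omega=0$ is exactly the seed condition $\iota(N)\omega=0$ for every $N\in\mathcal{I}m\,N^j$, and it is genuinely needed: for $Z\in\mathcal{T}_j^{0,1}$ and $\omega\in\Omega_j^{p,0}$ one has $\mathcal{L}_Z\omega=\iota(Z)d\omega=\iota(Z)\bar\partial\omega+\iota(Z)\bar\mu\omega$, so $\bar\partial\omega=0$ alone only gives $\mathcal{L}_Z\omega=\iota(Z)\bar\mu\omega\in\Omega_j^{p-1,1}$, which need not vanish. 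Your own sentence ``$\iota(Z)d\omega$ has no $(p,0)$-type components, hence vanishes once paired against $A^+$-vectors'' concedes this: you are only checking the $(p,0)$-part of $\mathcal{L}_Z\omega$, not the full vanishing required by the transverse description.

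Once you restore the missing hypothesis $\bar\mu\omega=0$, the rest of your outline is essentially the paper's argument: $\bar\mu\omega=0$ gives $\iota(W)\omega=0$ for $W\in(\mathcal{T}_j^{0,1})^{(1)}$, then $\bar\partial\omega=0$ gives $\mathcal{L}_Z\omega=0$ for $Z\in\mathcal{T}_j^{0,1}$, and one bootstraps to all of $(\mathcal{T}_j^{0,1})^{\infty}$ using $[\mathcal{L}_Z,\mathcal{L}_{Z'}]=\mathcal{L}_{[Z,Z']}$ and $[\iota(W),\mathcal{L}_Z]=\iota([W,Z])$.
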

\begin{proof}
We know that $H_{j\, trans}^{p,0}(M)= H_{\mathcal{D}_j^\infty,[j],\bar{\partial}}^{p,0}(M)$ and, by proposition \ref{hp0trans} we have 
 $$H_{j\, trans}^{p,0}(M):=\left\{ \omega \in \Omega^p(M,\mathbb{C})\,\vert\, \iota(Z)\omega=0,\, \mathcal{L}_Z\omega=0, \,\forall Z\in \left(\mathcal{T}_j^{0,1}\right)^{\infty}=\mathcal{T}_j^{0,1}\oplus A^-(D_j^\infty)\right\}.$$
 On the other hand
$$H_{Dol}^{(p,0)}(M)=\left\{\omega\in \Omega_j^{p,0}\,\vert\, \bar\mu\omega=0, \bar{\partial}\omega=0\right\}.$$
Now, for $\omega\in \Omega_j^{p,0}$, one has 
\begin{eqnarray*}
\bar\mu\omega=0 &\iff& d\omega (A^-(Y),A^-(Z),A^+(X_1),...,A^+(X_{p-1}))=0\\
& \iff &
\omega([Y+ijY,Z+ijZ],X_1-ijX_1,....,X_{p-1}-ijX_{p-1})=0 \\
&\iff & \iota(N^j(Y,Z))\omega=0.
\end{eqnarray*}
 So
 $$H_{\bar\mu}^{(p,0)}=\left\{\omega \in \Omega^p(M,{\mathbb{C}})\, \vert\, \iota(W)\omega=0 \, \, \forall W\in \left(\mathcal{T}_j^{0,1}\right)^{(1)}\right\}.$$ 
 We have, for $\omega\in H_{\bar\mu}^{(p,0)}\subset \Omega_j^{p,0}$
 \begin{eqnarray*}
 \bar\partial\omega=0 &\iff& d\omega (Y+ijY,A^+(X_1),...,A^+(X_{p}))=0\\
 & \iff  &
(Y+ijY)\omega(A^+(X_1),...,A^+(X_{p}))\\
&& \qquad\qquad -\sum_i \omega(A^+(X_1),.,[Y+ijY,A^+(X_i)],...,A^+(X_p))=0 \\
&\iff &{\mathcal{L}}_Z\omega=0 \,\,\,  \forall Z\in \mathcal{T}_j^{0,1}.
\end{eqnarray*}
Using the fact that $[{\mathcal{L}}_Z,{\mathcal{L}}_{Z'}]={\mathcal{L}}_{[Z,Z']}$ and $[\iota (W),{\mathcal{L}}_Z]= \iota([W,Z])$,  we get 
\begin{eqnarray*}H_{Dol}^{(p,0)}(M)&=&\left\{\omega \in \Omega^p(M,{\mathbb{C}})\,\vert\, \iota(W)\omega=0 \,\textrm{ and }\,  {\mathcal{L}}_Z\omega=0\,\,\forall  Z\in \mathcal{T}_j^{0,1}, \forall W\in \left(\mathcal{T}_j^{0,1}\right)^{(1)} \right\}\\   
&=&\left\{\omega \in \Omega^p(M,{\mathbb{C}})\,\vert\, \iota(Z)\omega=0 \,\textrm{ and }\, {\mathcal{L}}_Z\omega=0\,\,\forall  Z\in \left(\mathcal{T}_j^{0,1}\right)^{\infty}=\cup_k\left(\mathcal{T}_j^{0,1}\right)^{(k)}\right\}\\
&=&H_{\mathcal{D}_j^\infty,\bar{\partial}}^{p,0}(M).
\end{eqnarray*}
\end{proof}
\begin{rem}
Since any element in $\Omega_{\mathcal{D}_j^\infty}(M)=\oplus_{p,q}\Omega_{\mathcal{D}_j^\infty,[j]}^{p,q}(M)$ is in the kernel of $\bar\mu$, there is always a map from $\Omega_{\mathcal{D}_j^\infty}(M)$ to $H_{\bar\mu}^{(p,q)}$ mapping an element $\omega$ to $[\omega]$ and this induces a map in cohomology:
$$H_{j\, trans}^{p,q}(M)\rightarrow H_{Dol}^{(p,q)}(M) $$
mapping the class in $H_{j\, trans}^{p,q}(M)$ of a $\bar\partial$-closed $(p,q)$-form $\omega$ in $\Omega_{\mathcal{D}_j^\infty}(M)$ to the class in $H_{Dol}^{(p,q)}(M)$ of the $\widetilde{\bar\partial}$-closed element $[\omega]$ in $H_{\bar\mu}^{(p,q)}$.
\end{rem}
\section{A notion of minimal non-integrability}
\label{section:minimality for non-integrability}

\begin{definition}
We say that a non-integrable almost complex structure $j$ on a manifold $M$ is minimally non-integrable if the first derived distribution of $\mathcal{T}_j^{1,0} $, $\left(\mathcal{T}_j^{1,0}\right)^{(1)}=\mathcal{T}_j^{1,0} + \left(\mathcal{I}m\, N^j\right)^\C$,  is involutive.  By proposition \ref{prop:t1invol}, this will be true iff
$$
[N,jX]-j[N,X]=({\mathcal{L}}_Nj)X  \in \mathcal{I}m\, N^j\qquad \forall X\in {\mathfrak{X}}(M), \forall N \in \mathcal{I}m\, N^j.
$$
We could ask furthermore that $\dim\mathcal{I}m\, N^j=2$ everywhere. Then we have a foliation, with two-dimensional leaves carrying a complex structure, and with a transverse complex structure.
\end{definition}
As we have seen in prop \ref{prop:transvj}, minimal non-integrability for $\j$ means that  it has the largest possible  transverse complex structure. 


\subsection{Minimally non-integrable   invariant almost complex structure on a Lie group.}

Let $j$ be a left invariant almost complex structure on a Lie group $G$.
Denoting by $\g$ the Lie algebra of $G$ and by $J$ the endomorphism of $\g$ given by the value of $j$ at the neutral element $e\in G$, the Nijenhuis tensor $N^j$ is left invariant and its value
at $e$ is given by 
$$
N^J(X,Y):=[JX,JY]-J[JX,Y]-J[X,JY]-[X,Y]  \qquad X,Y \in \g.
$$
$\Image N^j$ is a smooth left invariant regular distribution whose value at $e$ is the $J$-invariant subspace $\Image N^J$ of $\g$.
\begin{prop}
Consider a left invariant almost complex structure $j$ on a Lie group $G$ such that $\Image N^J$
satisfies 
\begin{equation}\label{LNNJ}
[N,JX]-J[N,X]\in \Image N^J, \quad \forall X\in \g,\, N \in \Image N^J.
\end{equation}
Then $\Image N^j$ defines a foliation, the leaves carry an induced almost complex structure, and $j$ induces a transverse complex structure.\\
If the subgroup $H$ corresponding to the subalgebra $\Image N^J$ is closed, one has a principal fiber bundle $p: G\rightarrow G/H$, whose base manifold (which is the leaf space) is  complex, the fibers (leaves) are almost complex and the projection is pseudo-holomorphic.
The fibers are complex if, furthermore, $N^J(N,N')=0$ for all $N,N'\in \Image N^J$; this is always true if $\dim \Image N^J=2$.
\end{prop}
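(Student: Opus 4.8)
The plan is to string together the results already proved, adding standard Lie theory for the global picture and one invocation of the Newlander--Nirenberg theorem for the integrability statements.

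\textbf{Step 1 (foliation, almost complex leaves, transverse complex structure).} Since $j$ is left invariant, $\Image N^j$ is the left-invariant distribution with fibre the $J$-stable subspace $\Image N^J\subseteq\g$ at $e$; in particular it is regular. Condition \eqref{LNNJ} is condition \eqref{eq:LieN} read at the identity, so the computation carried out at the end of the proof of Proposition~\ref{prop:t1invol} shows both that $({\mathcal{L}}_Nj)X\in\Image N^j$ for all $N\in\Image N^j$, $X\in\mathfrak{X}(G)$, and that $\Image N^J$ is a Lie subalgebra of $\g$. Hence $\Image N^j$ is a regular involutive distribution and, by Frobenius, integrates to a foliation; moreover $\mathcal D_j^\infty=\mathcal D_j^{(1)}=\Image N^j$ (compare the Remark following Proposition~\ref{prop:t1invol}). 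By Proposition~\ref{prop:t1invol}, $(\mathcal T_j^{1,0})^{(1)}=\mathcal T_j^{1,0}+(\Image N^j)^{\C}$ is involutive, which by Proposition~\ref{prop:transvj} (with $\mathcal D=\Image N^j$) is precisely the statement that $j$ defines a complex structure transverse to this foliation; and since $J\Image N^J=\Image N^J$, the restriction of $J$ to $\Image N^J$ squares to $-\Id$ and, transported by left translations, makes every leaf a left-invariant almost complex manifold. This proves the first sentence.

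\textbf{Step 2 (principal bundle, complex base, pseudo-holomorphic projection).} Let $H\subseteq G$ be the connected subgroup with Lie algebra $\Image N^J$; by left invariance of the distribution, the leaves are exactly the cosets $gH$. If $H$ is closed, $p\colon G\to G/H$ is a principal $H$-bundle, $G/H$ is a manifold, $p$ a submersion, and $G/H$ is the leaf space, so we are in the ``ideal situation'' of Section~\ref{section:transverse}. There the $\mathcal D$-transverse almost complex structure $[j]$ descends to an almost complex structure $\widehat j$ on $G/H$ satisfying $dp\circ j=\widehat j\circ dp$ (the normal bundle $TG/\Image N^j\simeq p^*T(G/H)$ carrying the structure $\Pi(U)\mapsto\Pi(jU)$), and since $j$ defines a \emph{complex} transverse structure the Nijenhuis tensor of $\widehat j$ vanishes, whence $\widehat j$ is integrable by Newlander--Nirenberg and $G/H$ is a complex manifold. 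The relation $dp\circ j=\widehat j\circ dp$ is exactly pseudo-holomorphy of $p$, and each fibre $gH$, having tangent space $dL_g(\Image N^J)$ preserved by $j_g$, is a $j$-invariant submanifold, hence almost complex.

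\textbf{Step 3 (when the fibres are complex).} As $\Image N^J$ is a subalgebra, the bracket of $H$ is the restriction of that of $\g$, so the Nijenhuis tensor of $H$ equipped with its induced left-invariant almost complex structure equals, at $e$, the restriction of $N^J$ to $\Image N^J\times\Image N^J$ (which takes its values in $\Image N^J$ by definition of the latter). Thus the fibre through $e$, and hence every fibre by left translation, is complex (again by Newlander--Nirenberg) precisely when $N^J(N,N')=0$ for all $N,N'\in\Image N^J$. Finally, if $\dim\Image N^J=2$, write $\Image N^J=\Span\{N,JN\}$; using that $N^J$ is antisymmetric and satisfies $N^J(JX,Y)=-JN^J(X,Y)$ one gets $N^J(N,JN)=-JN^J(N,N)=0$, so $N^J$ vanishes on $\Image N^J$ automatically, giving the last assertion. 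The only steps not already contained in the earlier results are the identification of the leaves with cosets of the connected subgroup integrating $\Image N^J$ together with the closed-subgroup/principal-bundle theorem, and --- the single genuinely analytic input --- the passage from the transverse complex structure to an honest integrable $\widehat j$ on $G/H$ via Newlander--Nirenberg; these are the main (though standard) points, the rest being routine left-invariance bookkeeping and the identity $N^J(JX,Y)=-JN^J(X,Y)$.
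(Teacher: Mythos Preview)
Your proof is correct and is precisely the argument the paper has in mind: the proposition is stated in the paper \emph{without proof}, the authors evidently regarding it as an immediate consequence of Proposition~\ref{prop:t1invol} (involutivity of $(\mathcal T_j^{1,0})^{(1)}$ under condition~\eqref{eq:LieN}) and Proposition~\ref{prop:transvj} (transverse complex structure) specialised to the left-invariant setting, together with standard Lie theory for the bundle picture. Your Steps~1--3 assemble exactly these ingredients, and the remark following the proposition in the paper (that \eqref{LNNJ} forces $\Image N^J$ to be a subalgebra) is the only hint the authors give, which you recover via the computation in Proposition~\ref{prop:t1invol}.
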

Remark that  condition \eqref{LNNJ} implies that $\Image N^J$ is a subalgebra of $\g$ and is automatically satisfied if $\Image N^J$ is an ideal in $\g$.
\subsection{Minimally non-integrable  invariant almost complex structure on a homogeneous space.}

Let $G\times M\rightarrow M : (g,x)\mapsto g\cdot x=:\rho(g)x$ denote the action of a Lie group $G$ on a manifold $M$. Assume this action is transitive. Choosing a base point $x_0$, its  stabilizer will be denoted by $H$, so $H=\{ g\in G \, \vert \, g\cdot x_0=x_0\}$ is a closed subgroup of $G$ and the manifold $M$ is diffeomorphic to $G/H$. We denote by $\pi:G\rightarrow M : g\mapsto g\cdot x_0$ the canonical projection. Thus $T_{x_0}M$ identifies with $\g/ \h$. For any element $h\in H$, the action $\rho(h)_{*x_0}$ on $T_{x_0}M$ identifies with the action induced by $\Ad(h)$ and denoted $\widetilde{\Ad(h)}$ on the quotient $\g/ \h$.
We denote by $A^*$ the fundamental vector field on $M$ defined by an element $A\in \g$ (i.e. $A^*_x=\frac{d}{dt} \exp -tA\cdot x\vert_{t=0}$); clearly $A^*_{x_0}=-\pi_{*e}A$.

Assume  that there exists an almost complex structure $j$ on $M$ which is $G$-invariant, i.e.
$\rho(g)_{*x}\circ j_x= j_{g\cdot x}\circ \rho(g)_{*x}$ for all $g\in G$. 
Invariance  implies
\begin{equation}\label{eq:invj}
L_{A^*}j=0 \qquad \textrm{i.e.}\quad [A^*, jX]=j[A^*,X] \qquad \forall X\in {\mathfrak{X}}(M),\qquad \forall A\in \g.
\end{equation}
The Nijenhuis tensor $N^j$ is  invariant under the action of $G$. Its value at the base point is obtained as follows
\begin{prop}(\cite{KN}, thm 6.4, page 217)\label{prop:NJhomog}
Let $M$ be  a $G$-homogeneous  manifold  endowed with a $G$-invariant  almost complex structure $j$. We choose a base point $x_0 \in M$ and a linear map  $J :\g\rightarrow \g$  such that $j_{x_0}A^*=(J A)^*_{x_0}$ for all $A\in \g$. Then
\begin{equation}\label{eq:NJinv}
N^j_{x_0}(A^*,B^*)=\left(-N^{J}(A,B)\right)^*_{x_0}
\end{equation}
 where $N^{J}$ is defined in terms of the Lie bracket in $\g$ by 
$$
N^J(A,B):=[JA,JB] -J[JA,B]-J[A,JB]-[A,B] \quad \forall A,B \in\g.
$$
\end{prop}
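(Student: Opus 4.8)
The plan is to transfer the whole computation to Lie brackets in $\mathfrak{g}$ by exploiting the $G$-invariance of $j$, which by \eqref{eq:invj} says $\mathcal{L}_{A^*}j=0$ for every $A\in\mathfrak{g}$. I will use three standard ingredients: with the sign convention $A^*_x=\frac{d}{dt}\exp(-tA)\cdot x\vert_{t=0}$ the assignment $A\mapsto A^*$ is a Lie algebra homomorphism, $[A^*,B^*]=[A,B]^*$; the defining relation $j_{x_0}(C^*_{x_0})=(JC)^*_{x_0}$ together with $j^2=-\Id$ gives $(J^2C)^*_{x_0}=-C^*_{x_0}$; and the bracket of two vector fields that both vanish at $x_0$ vanishes at $x_0$. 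First I would rewrite the Nijenhuis tensor as $N^j(X,Y)=(\mathcal{L}_{jX}j)Y+(\mathcal{L}_Xj)(jY)$, which is a purely algebraic consequence of $(\mathcal{L}_Zj)W=[Z,jW]-j[Z,W]$ and $j^2=-\Id$. Taking $X=A^*$ and using $\mathcal{L}_{A^*}j=0$, the second summand drops out and one is left with $N^j(A^*,B^*)=(\mathcal{L}_{jA^*}j)B^*$.

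The next step deals with the fact that $jA^*$ is not itself a fundamental vector field. Set $W:=jA^*-(JA)^*$; by the defining relation $W_{x_0}=0$, whereas $(JA)^*$ is fundamental, so $\mathcal{L}_{(JA)^*}j=0$. Since $\mathcal{L}_{(\cdot)}j$ is additive in the vector field, $\mathcal{L}_{jA^*}j=\mathcal{L}_{(JA)^*}j+\mathcal{L}_Wj=\mathcal{L}_Wj$, and it remains to evaluate $(\mathcal{L}_Wj)B^*=[W,jB^*]-j[W,B^*]$ at $x_0$. For this I would first establish the identity of vector fields, valid for every $C\in\mathfrak{g}$, $[W,C^*]=j([A,C]^*)-[JA,C]^*$: here $[(JA)^*,C^*]=[JA,C]^*$ is the homomorphism property, and $[jA^*,C^*]=-\mathcal{L}_{C^*}(jA^*)=-j(\mathcal{L}_{C^*}A^*)=-j[C^*,A^*]=j([A,C]^*)$ uses $\mathcal{L}_{C^*}j=0$. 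Evaluating at $x_0$ gives $[W,C^*]_{x_0}=(J[A,C])^*_{x_0}-[JA,C]^*_{x_0}$.

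To conclude, apply this identity with $C=B$, and with $C=JB$ after writing $jB^*=(JB)^*+W'$ where $W'_{x_0}=0$ so that $[W,W']_{x_0}=0$; this gives $[W,jB^*]_{x_0}=(J[A,JB]-[JA,JB])^*_{x_0}$ and $j[W,B^*]_{x_0}=j_{x_0}\big((J[A,B]-[JA,B])^*_{x_0}\big)=(J^2[A,B]-J[JA,B])^*_{x_0}=(-[A,B]-J[JA,B])^*_{x_0}$, using $j_{x_0}(C^*_{x_0})=(JC)^*_{x_0}$ and $(J^2C)^*_{x_0}=-C^*_{x_0}$. Subtracting and collecting terms yields $N^j_{x_0}(A^*,B^*)=\big(J[A,JB]-[JA,JB]+[A,B]+J[JA,B]\big)^*_{x_0}=\big(-N^J(A,B)\big)^*_{x_0}$, as claimed. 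The only delicate point — and the step I expect to be the main obstacle to state cleanly — is that $\mathcal{L}_Wj$ does not vanish at $x_0$ even though $W$ does, since it depends on the first jet of $W$; this is harmless here because those first-order contributions of $W$ always appear paired against vector fields ($W'$, or brackets of objects vanishing at $x_0$) that themselves vanish at $x_0$, hence contribute nothing at $x_0$.
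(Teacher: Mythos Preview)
Your argument is correct. The identity $N^j(X,Y)=(\mathcal{L}_{jX}j)Y+(\mathcal{L}_Xj)(jY)$ is right, the reduction to $(\mathcal{L}_Wj)B^*$ via invariance is clean, and the key step $[W,W']_{x_0}=0$ when both $W,W'$ vanish at $x_0$ is a standard local-coordinate fact. The final collection of terms matches $-N^J(A,B)$ exactly.

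Your route differs from the paper's. The paper first uses invariance to simplify $N^j(A^*,B^*)=[jA^*,jB^*]+[A^*,B^*]$, and then computes $[jA^*,jB^*]_{x_0}$ by introducing an auxiliary torsion-free connection $\nabla$: writing the bracket as $\nabla_{jA^*}jB^*-\nabla_{jB^*}jA^*$ lets one replace $jA^*$ by $(JA)^*$ at $x_0$ in the subscript of $\nabla$ (pointwise in the lower slot), and repeated use of torsion-freeness plus invariance unwinds the expression. Your approach avoids any auxiliary structure: the decomposition $jA^*=(JA)^*+W$ together with the vanishing of $[W,W']$ at $x_0$ replaces the connection trick. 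This is arguably more elementary and keeps the computation inside the Lie-derivative calculus; the connection method is the classical device from \cite{KN} and has the advantage of being a reusable template whenever one needs to compare a tensor built from non-fundamental vector fields with fundamental ones at a base point.
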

\begin{proof}
Using the invariance of $j$, we have, for all $A,B\in \g$,
 \begin{equation*}
 N^j(A^*,B^*)=[jA^*,jB^*]+j [B^*, jA^*]-j[A^*,jB^*]-[A^*,B^*]=[jA^*,jB^*]+[A^*,B^*]  
  \end{equation*}
We introduce an auxiliary torsion-free linear connexion $\nabla$ on the manifold $M$.  At the base point $x_0$, one has
\begin{eqnarray*}
[jA^*,jB^*]_{x_0}&=&(\nabla_{jA^*}jB^*-\nabla_{jB^*}jA^*)_{x_0}=(\nabla_{(JA)^*}jB^*-\nabla_{(JB)^*}jA^*)_{x_0} \\
&=& \left([(JA)^*,jB^*] +\nabla_{jB^*}(JA)^*-[(JB)^*,jA^*]-\nabla_{jA^*}(JB)^*\right)_{x_0}\\
&=&\left(j[(JA)^*,B^*]+\nabla_{(JB)^*}(JA)^*-j[(JB)^*,A^*]-\nabla_{(JA)^*}(JB)^*\right)_{x_0}\\
&=&\left(j[JA,B]^*+[(JB)^*,(JA)^*]-j[JB,A]^*\right)_{x_0} \\
&=&\left(J[JA,B]+[JB,JA]-J[JB,A]\right)^*_{x_0}.
\end{eqnarray*}
 Hence, $N^j(A^*,B^*)_{x_0}=\left(-[JA,JB]+J[JA,B]+J[A,JB]+[A,B]^*\right)^*_{x_0}=-N^J(A,B)^*_{x_0}$.
 \end{proof}
 \begin{prop}
 Let $M$ be  a $G$-homogeneous  manifold  endowed with a $G$-invariant  almost complex structure $j$. We choose a base point $x_0 \in M$ and a linear map  $J :\g\rightarrow \g$  such that $j_{x_0}A^*=(J A)^*_{x_0}$ for all $A\in \g$. The distribution $\Image N^j$ is involutive and gives a foliation with transverse complex structure if and only if 
\begin{equation}
 [JA,N^J(B,C)]-J[A, N^J(B,C)] \in \Image N^J+\h \qquad \forall A,B,C \in \g,
 \end{equation}
with $N^J$ defined as above  $N^J(A,B)=[JA,JB] -J[JA,B]-J[A,JB]-[A,B]$. 
 \end{prop}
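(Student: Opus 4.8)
The plan is to transfer the intrinsic involutivity criterion for $\Image N^j$ — namely condition \eqref{eq:LieN} of Proposition \ref{prop:t1invol} — to the base point of the homogeneous space $M=G/H$ using Proposition \ref{prop:NJhomog} and the invariance relation \eqref{eq:invj}. Since $\Image N^j$ is a $G$-invariant distribution, it suffices (by Proposition \ref{prop:t1invol} together with Proposition \ref{prop:transvj}) to check the involutivity condition at $x_0$; and since the fundamental vector fields $A^*$ span $T_{x_0}M$, every element of $(\Image N^j)_{x_0}$ is of the form $(-N^J(B,C))^*_{x_0}$ by \eqref{eq:NJinv}, while every tangent vector at $x_0$ is of the form $A^*_{x_0}$. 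So the condition $(\mathcal{L}_N j)X \in \Image N^j$ at $x_0$ becomes $(\mathcal{L}_{(N^J(B,C))^*} j)(A^*)_{x_0} \in (\Image N^j)_{x_0}$ for all $A,B,C\in\g$.

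First I would compute $(\mathcal{L}_{N^*}j)(A^*)$ for $N\in\g$ in terms of the Lie algebra data. By definition $(\mathcal{L}_{N^*}j)(A^*) = [N^*, jA^*] - j[N^*,A^*]$. One has the standard identity $[N^*,A^*] = -[N,A]^*$ and $jA^*_{x_0} = (JA)^*_{x_0}$, but $j(N^* )$ need not be a fundamental vector field away from $x_0$, so I would introduce the torsion-free auxiliary connection $\nabla$ exactly as in the proof of Proposition \ref{prop:NJhomog} and evaluate at $x_0$. The computation should yield, at $x_0$,
$$
\big(\mathcal{L}_{N^*}j\big)(A^*)_{x_0} = \big([JN, A] - J[N,A] - [N,JA] + J\cdot(\text{something in }\h)\big)^*_{x_0},
$$
more precisely $\big(\mathcal{L}_{N^*}j\big)(A^*)_{x_0} \equiv \big([JN,A]-J[N,A]-[N,JA]\big)^*_{x_0}$ modulo the subspace $\h^*_{x_0}=0$, i.e.\ modulo terms whose fundamental vector field vanishes at $x_0$; care is needed because $J$ is only defined up to a map into $\h$. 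Substituting $N = N^J(B,C)$ and using $J\cdot N^J(B,C)$-anti­commutation one recovers, after rearranging, an expression equivalent to $[JA, N^J(B,C)] - J[A,N^J(B,C)]$ modulo $\h$ and modulo $\Image N^J$.

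Then I would invoke the criterion: $\Image N^j$ is involutive and $j$ induces a $\mathcal{D}$-transverse complex structure on the leaf space if and only if $(\mathcal{L}_N j)X \in \Image N^j$ for all $N\in\Image N^j$ and $X$ (Proposition \ref{prop:t1invol} combined with Proposition \ref{prop:kderived}, which already records that $\mathcal{D}_j^{(2)}=\mathcal{D}_j^{(1)}+\sum_U \Image\mathcal{L}_U j + [\mathcal{D}_j^{(1)},\mathcal{D}_j^{(1)}]$, so that $\mathcal{D}_j^{(1)}=\Image N^j$ being closed under bracket and under $\mathcal{L}_N j$ is exactly what makes $(\mathcal{T}_j^{1,0})^{(1)}$ involutive). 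Translating "$\in\Image N^j$ at $x_0$" back through $\pi_{*e}$ amounts to "$\in \Image N^J + \h$" at the Lie algebra level, since $\Ker\pi_{*e}=\h$. Combining this with the computation of the previous paragraph gives precisely the stated condition $[JA,N^J(B,C)] - J[A,N^J(B,C)] \in \Image N^J + \h$ for all $A,B,C\in\g$, and conversely this condition forces \eqref{eq:LieN} at $x_0$ hence everywhere by $G$-invariance, so the distribution is involutive; the remark that it then defines a foliation with transverse complex structure follows from the Remark after Proposition \ref{prop:t1invol} since all derived distributions in the homogeneous setting are $G$-invariant and regular.

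The main obstacle I anticipate is bookkeeping the ambiguity in $J$ and the failure of $j$ to preserve fundamental vector fields globally: $(\mathcal{L}_{N^*}j)(A^*)$ must be computed as a genuine vector field and only then evaluated at $x_0$, and one must be careful that replacing $J$ by $J + (\text{map to }\h)$ changes intermediate terms by elements of $\h$ which are invisible at $x_0$ but must be tracked to be sure the final congruence is modulo $\Image N^J + \h$ and not something larger. Using the torsion-free connection trick of Proposition \ref{prop:NJhomog} to reduce brackets of non-fundamental fields to Lie-algebra brackets plus connection terms that cancel in pairs is the technical device that makes this manageable; the algebraic identity $j[U,U']=[U,jU']-(\mathcal{L}_U j)(U')$ and the $J$-anticommutation $N^J(JA,B) = -J N^J(A,B)$ already noted in the excerpt will be used repeatedly to simplify.
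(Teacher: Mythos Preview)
Your overall strategy---reduce the involutivity criterion \eqref{eq:LieN} to the base point by $G$-invariance, then express everything via the Lie algebra data $J$ and $N^J$---is exactly the paper's plan, and you correctly identify the auxiliary torsion-free connection as the device for evaluating brackets involving $jA^*$ at $x_0$. However, there is a genuine gap in the reduction step.

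The error is the substitution of the fundamental vector field $(N^J(B,C))^*$ for the section $N^j(B^*,C^*)$ of $\Image N^j$. These two vector fields have the same value at $x_0$ by \eqref{eq:NJinv}, but $\mathcal{L}_N j$ is \emph{not} tensorial in $N$: it depends on the $1$-jet of $N$. In fact, since $(N^J(B,C))^*$ is a fundamental vector field, the invariance relation \eqref{eq:invj} gives $\mathcal{L}_{(N^J(B,C))^*}j=0$ identically. So your reformulated condition ``$(\mathcal{L}_{(N^J(B,C))^*} j)(A^*)_{x_0}\in(\Image N^j)_{x_0}$'' is vacuously true for every $G$-invariant $j$, and the displayed expression $[JN,A]-J[N,A]-[N,JA]$ cannot be what that Lie derivative evaluates to (if computed correctly it must vanish modulo $\h$). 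The condition you want is therefore lost, not recovered.

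What the paper does instead is keep $N=N^j(B^*,C^*)$ as a genuine section of $\Image N^j$ and compute $[jA^*,N^j(B^*,C^*)]_{x_0}$ directly. The key extra ingredient you are missing is the $G$-invariance of the \emph{tensor} $N^j$ (not just of $j$): from $\mathcal{L}_{A^*}N^j=0$ one gets
\[
[A^*,N^j(B^*,C^*)]=N^j([A^*,B^*],C^*)+N^j(B^*,[A^*,C^*]),
\]
which lets one evaluate $[(JA)^*,N^j(B^*,C^*)]_{x_0}$ in terms of $N^J$. The connection trick is then used to pass from $jA^*$ to $(JA)^*$ at $x_0$, producing the residual term $\bigl(J[N^J(B,C),A]-[N^J(B,C),JA]\bigr)^*_{x_0}$ that survives modulo $\Image N^J+\h$ and yields the stated criterion. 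The ``function-multiplication'' remark in the paper (that the condition $(\mathcal{L}_N j)X\in\Image N^j$ is preserved under $N\mapsto fN$) is precisely what justifies reducing from arbitrary sections of $\Image N^j$ to the particular sections $N^j(B^*,C^*)$---but no further.
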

 \begin{proof}
  Since $j$ in $G$-invariant, the tensor $N^j$ is $G$-invariant, hence, for all $A\in\g$,
 \begin{equation}
L_{A^*}N^j=0 \quad \textrm{i.e.}\quad [A^*,  N^j(X,Y)]=N^j([A^*,X],Y)+N^j(X,[A^*,Y]) \qquad \forall X,Y\in {\mathfrak{X}}(M).
\end{equation}
 The condition $[jX,N]-j[X,N] \in {\mathcal{I}m}N^j$ for all $X\in {\mathfrak{X}}(M)$ and for all $N\in {\mathcal{I}m}N^j$ will be satisfied at every point if it is satisfied at the base point.
 Since  $[jX,N]-j[X,N]$ is tensorial in $X$,  it is enough to verify it for $X$ in the space of fundamental vector fields; since it remains true if one multiplies $N\in {\mathcal{I}m}N^j$ by a function, the condition will be satisfied if and only if  
 \begin{equation}
 \left([jA^*,N^j(B^*,C^*)]-j[A^*,N^j(B^*,C^*)]\right)_{x_0} \in \Image N^j_{x_0} \qquad \forall A,B,C\in \g.
 \end{equation}
Introducing again  an auxiliary torsion-free linear connexion $\nabla$ on the manifold $M$, we have
\begin{eqnarray*}
\left([jA^*,N^j(B^*,C^*)]\right)_{x_0}&=&\left([(JA)^*,N^j(B^*,C^*)]    -\nabla_{N^j(B^*,C^*)}(jA^*-(JA)^*)\right)_{x_0}\\
&=&\left([(JA)^*,N^j(B^*,C^*)]+\nabla_{(N^J(B,C))^*}(jA^*-(JA)^*)\right)_{x_0}\\
&=&[(JA)^*,N^j(B^*,C^*)]_{x_0}+[N^J(B,C)^*,jA^*-(JA)^*]_{x_0}\\
&=&\left([(JA)^*,N^j(B^*,C^*)]+j[N^J(B,C)^*,A^*]-[N^J(B,C),JA]^*\right)_{x_0}\\
&=&\left(N^j([JA,B]^*,C^*)+N^j(B^*,[JA,C]^*)\right)_{x_0}\\
&&\qquad+\left(J[N^J(B,C),A]-[N^J(B,C),JA]\right)^*_{x_0}\\
&=&\left(-N^J([JA,B],C)-N^J(B,[JA,C])\right.\\
&&\qquad\left.+J[N^J(B,C),A]-[N^J(B,C),JA]\right)^*_{x_0}.
\end{eqnarray*}
On the other hand, we have  
\begin{eqnarray*}
\left(j[A^*,N^j(B^*,C^*)]\right)_{x_0}&=&\left(jN^j([A,B]^*,C^*)+jN^j(B^*,[A,C]^*)\right)_{x_0}\\
&=&\left(-JN^J([A,B],C)-JN^J(B,[A,C])\right)^*_{x_0}.
\end{eqnarray*}
Hence 
\begin{eqnarray*}
\left([jA^*,N^j(B^*,C^*)]-j[A^*,N^j(B^*,C^*)]\right)_{x_0}
 &=&\left(-N^J([JA,B],C)-N^J(B,[JA,C]) \right.\\
 &&\qquad  +J[N^J(B,C),A]-[N^J(B,C),JA]\\
 &&\qquad \left. +JN^J([A,B],C)+JN^J(B,[A,C])\right)^*_{x_0}\\
 \end{eqnarray*}
 and this is in $\Image N^j_{x_0}=(\Image N^J)^*_{x_0}$ iff 
 \begin{equation*}
 [JA,N^J(B,C)]-J[A, N^J(B,C)] \in \Image N^J+\h.
 \end{equation*}
\end{proof}


\begin{thebibliography}{99}




\bibitem{bib:CGGH} M. Cahen, M. G\'erard, S. Gutt and M. Hayyani, Distributions associated to  almost complex structures 
on  symplectic manifolds, Journal of Symplectic Geometry 19,5 (2021). 1071--1094.  

\bibitem{bib:CGR} Michel Cahen,  Simone Gutt and John Rawnsley, On Twistor Almost Complex Structures, 
 Journal of Geometric Mechanics 13,3, 2021, 313--331, doi: 10.3934/jgm.2021006.


\bibitem{bib:CW} Joana Cirici, Scott O. Wilson,  Dolbeault cohomology for almost complex manifolds, Advances in Math., 391 (2021) 107970.

\bibitem{bib:CPS} R. Coelho, G. Placini, and J. Stelzig, Maximally  Non-integrable almost complex structures: an $h$-principle and cohomological properties, arXiv:2105.12113. 


\bibitem{bib:ES} J. Eells, S. Salamon,
Twistorial construction of harmonic maps of surfaces into four-manifolds,
Annali della Scuola Normale Superiore di Pisa - Classe di Scienze, Serie 4. \textbf{12} (1985) 589--640.



\bibitem{bib:Hirz} F. Hirzebruch, Some problems on differentiable and complex manifolds, Ann. of Math. (2) 60, (1954). 213–236.

\bibitem{bib:Hodge} Hodge, William V. D., The Theory and Applications of Harmonic Integrals,
Cambridge University Press, Cambridge, 1941.

\bibitem{bib:HZ1} T. Holt, W. Zhang, Harmonic forms on the Kodaira-Thurston manifold,  Advances in  Math., 400 (2022) doi.org/10.1016/j.aim.2022.108277.



\bibitem{KN} S. Kobayashi and K. Nomizu, Foundations of differential geometry, vol 2, 1969, Tracts in Mathematics 15, Interscience Publ.

\bibitem{bib:TT} N. Tardini and A. Tomassini, $\bar\partial$ -Harmonic Forms on 4-Dimensional Almost- Hermitian Manifolds, arXiv:2104.10594, Math. Res. Lett., to appear.

\bibitem{bib:V} M. Verbitsky, Hodge theory on nearly K\"ahler manifolds, Geom. Topol. 15 (2011), no. 4, 2111–2133.

\bibitem{bib:Zhang} Weiyi Zhang, Almost complex Hodge theory, arXiv:2203.09274
.


\end{thebibliography}
\end{document}